\newtheorem{thm}{Theorem}
\newtheorem{prop}[]{Proposition}
\newtheorem{lem}[]{Lemma}
\newtheorem{cor}{Corollary}
\newtheorem{ex}[]{Eample}
\newtheorem{rem}[]{Remark}
\newcommand{\Rset}{\mathbb{R}}
\newcommand{\Cset}{\mathbb{C}}
\begin{document}

\title{Urbanik type subclasses of  free - infinitely  divisible transforms. }
\author{Zbigniew J.  Jurek  (University of Wroc\l aw)}
\date{August 23, 2021.}
\maketitle
\medskip
[ to appear in \emph{Probab. Math. Statistics}, vol. 42,(2022)]

\medskip
\medskip
\emph{\underline{ABSTRACT}.}

For the class of free-infinitely divisible transforms  are introduced  three  families  of increasing  Urbanik type  subclasses of those transforms. They begin with the class of free-normal transforms and end up  with the whole class of  free-infinitely divisible transforms. Those subclasses are derived from the ones of classical infinitely divisible measures for which are known their random integral representations. Special functions like Hurwitz-Lerch,  polygamma and hypergeometric  appear in kernels of the corresponding integral representations.

\medskip
\medskip
\underline{AMS 2020 Subject Classifications}:

 Primary: 60E07,\ 60E10, \ 60H05; \ \ Secondary: 33B15, \ 33C05.

\medskip
\underline{Key words and phrases}:

characteristic function; infinite divisibility; selfdecomposability;

s-selfdecomposability; L\'evy-Khintchine formula; free-infinite divisibility;

Nevanlinna-Pick functions;  polygamma function;

hypergeometric function.

\medskip
\underline{Running title}: Urbanik type subclasses ...

\medskip
\medskip
\underline{Author's addresses}:

Institute of Mathematics

University of Wroc\l aw

Pl. Grunwaldzki 2/4

50-384 Wroc\l aw,  Poland.

www.math.uni.wroc.pl/$\sim$zjjurek ;  zjjurek@math.uni.wroc.pl

\newpage
The limit distribution theory is one of the main topics in the probability theory. Historically, it began  with the central limit theorem which says that properly normalized partial sums of\emph{ independent and identically distributed} (i.i.d)  random variables with  finite second moment, converge in distribution to \emph{the standard normal (Gaussian)} variable. When one drops the assumption about  moments but still assumes i.i.d. variables, at the limit we get the class of \emph{stable distributions} (variables). Still further, if we assume that observations (variables)  are only stochastically independent but after some normalization by positive constants the corresponding triangular is \emph{uniformly infinitesimal} then at the limit we obtain the class of \emph{selfdecomposable distributions} ( L\'evy class $L$). Finally,  limits of sums of arbitrary infinitesimal and row-wise independent triangular arrays coincide with the class of \emph{infinitely divisible distributions}; see  Feller (1966), Chapter XVII or  Gnedenko and Kolmogorov (1954), Sect. 17-19, \ 29-30 and  33 or Loeve (1963), Sect. 23.
Thus we have
\[(\emph{Gaussian})\subset ...\subset(\emph{selfdecomposable})\subset...\subset(\emph{infinitely divisible}) \,\, (\star)
\]
[Here it might be worthy to notice that the class of selfdecomposable measures can be obtained from \emph{ strongly mixing sequences}  not necessarily stochastically independent; cf. Bradley and Jurek (2014). That possible direction of studies is not continued in this article.]

\medskip
Urbanik (1972, 1973) refined the left hand side inclusion (below class $L$)  and Jurek (1988, 1989) the right hand side inclusion by introducing new  subclasses of some limit distributions.

Later on, all the introduced subclasses were generalized to distributions on infinite dimensional spaces and then they were
described as distributions of  some random integrals on arbitrary Banach spaces. In Jurek (1983a), the normalization of partial sums of random variables was done by  linear bounded operators on a Banach space. Those and other multidimensional set-ups might be of some use in a generalization to multidimensional free-probability theory.
[For  the random integral representation conjecture see the link given below the reference Jurek (1985.]

In this paper we give characterizations of the above mentioned results (i.e., the refinements of inclusions in $(\star)$) for the  additive free-independence (free-additive convolution $\boxplus$). More precisely, we describe  its corresponding  free-independent (Voiculescu) transforms. Those transforms are considered only on the imaginary axis which is enough for the identification of the corresponding measure; see Jurek (2006), Jankowski and Jurek (2012).

In Theorem 1 and auxiliary  Lemma 1 we  treat the general random integrals mappings that lead to  subsets of free-independent transforms . Propositions 1-4 provide  applications of Theorem 1 to some specified mappings.  A complete filtration of the class of all free-infinitely divisible transforms is given  in  Corollary 2. Finally, Theorem 2 shows an intrinsic relation between two classes of free-infinitely divisible transforms : one derived from \emph{linear} scalings and the other from  \emph{non-linear} scalings.

\medskip
\medskip
\textbf{0. Introduction and notations.}

We will introduce Urbanik type subclasses of  free-infinitely  divisible

\noindent Voiculescu  transforms in a such way that
\begin{multline}
(Gaussian, \boxplus)\subset (stable, \boxplus)\subset ... \subset(\mathcal{U}^{<k+1>}, \boxplus)\subset (\mathcal{U}^{<k>},\boxplus) \\ \subset ... \subset (\mathcal{U}^{<2>}, \boxplus)\subset (\mathcal{U}^{<1>},\boxplus)\equiv (\mathcal{U},\boxplus)\\ \equiv   (\mathbb{U}_1,\boxplus) \subset ... \subset (\mathbb{U}_k,
\boxplus)\subset(\mathbb{U}_{k+1},\boxplus)\subset ... \subset\overline{\cup_{k=1}^\infty (\mathbb{U}_k,\boxplus)}\equiv (ID,\boxplus),
\end{multline}
where the closure is in the point-wise convergence of Voiculescu transforms (the topology of weak convergence of measures) and $\boxplus$ is the free-additive convolution.

Classes $(\mathcal{U}^{<k>},\boxplus)$ and  $(\mathbb{U}_k, \boxplus)$ are the free-probability counterparts of the classical probability classes $ (\mathcal{U}^{<k>},\ast)$ and $ (\mathbb{U}_k, \ast) $ in $(ID,\ast)$. For each of these classes we have their characterizations in terms of the random integrals; for a general conjecture see the link below reference Jurek (1985). 

For the above classes  we have  the following integral representations
\begin{multline}
(i) \ \ \mu\in (\mathcal{U}^{<k>},\ast) \ \mbox{iff} \ \mu=\mathcal{L}\Big(\int_{(0,1]}tdY(\tau_k(t))\Big), \\ \tau_k(t):= \frac{1}{(k-1)!}\int_0^t(- \log v)^{k-1} dv; \  0<t  \le 1 ; \  \
 k=1,2,... \\ (ii)\ \  \nu\in (\mathbb{U}_k,\ast) \ \mbox{iff} \ \mu=\mathcal{L}\Big(\int_{(0,1]}tdY(r_k(t))\Big), \ r_k(t):=t^k, \ 0\le t\le 1,
\end{multline}
and $(Y(t), t\ge 0)$ is a cadlag L\'evy process and $\mathcal{L}(Z)$ denotes a probability distribution ( a law) of a random variable $Z$.

\noindent [Although $\int (- \log x)^k dx=\Gamma(k,-\log x)+ const $, (the incomplete Euler gamma function), we do not use  that identity here.]

The above, (i) and (ii), are  particular examples of  random integrals
\begin{equation}
\rho=I^{h,r}_{(a,b]}(\mu):=\mathcal{L}\Big(\int_{(a,b]}h(t)dY(r(t))\Big), \ \  \mathcal{L}(Y(1))=\mu\in \mathcal{D}^{h,r}_{(a,b]}
\end{equation}
where $h$ is a real function, $r$ (a time change)  is a monotone, nonnegative function and $\mathcal{D}^{h,r}_{(a,b]}$ denotes  the domain of  a random integral $I^{h,r}_{(a,b]}$; for details see, for instance, Jurek (1988) or (1989) or (2004) or (2007) or (2018). To $Y$ (to $\mu$) we refer as  the \emph{background driving L\'evy process} (BDLP) (\emph{ the background driving probability distribution}(BDPD)) of the measure $\rho$.

The identification (the isomorphism) between classical infinitely divisible characteristic functions $\phi_{\mu}(t)$ and their counter part Voiculescu free-infinitely divisible transforms $V_{\tilde{\mu}}(it)$ (or measures )  is given as follows:
\begin{equation}
(ID,\ast)\ni \mu \to V_{\tilde{\mu}}(it)=it^2\int_0^\infty\log\phi_\mu(-u)e^{-tu}du,  \ \ t>0;
\end{equation}
see Jurek (2007), Corollary 6 and the random integral mapping $\mathcal{K}^{(e)}$ which was the origin for the identity (4). The need for a such  identification arises when one wants to use Bercovici-Pata isomorphism but we do not have parameters $a$ and $m$, in the L\'evy-Khintichine or Bervovici-Voiculescu formula, for the respective classical and free independence. That those two approaches do coincide was shown in Jurek (2007), Jurek (2016) or Jurek (2020), Theorem 2.1.  Moreover, in  Jurek (2020), on page 350, is given a diagram how one may connect any two abstract semigroups.

\medskip
From the above  mapping (4)  we infer the  properties
\[
V_{\widetilde{\mu\ast\nu}}(it)= V_{\tilde{\mu}}(it)+ V_{\tilde{\nu}}(it)= V_{\tilde{\mu}\boxplus \tilde{\nu}}(it); \ \ \mbox{for $c>0$,} \ \ V_{\widetilde{T_c\mu}}(it)=cV_{\tilde{\mu}}(it/c),
\]
and the last property is in the sharp contrast with $\phi_{T_c\mu}(t)=\phi_\mu(ct)$, for the characteristic functions.

The fundamental L\'evy-Khintchine characterization says that
\begin{multline}
\mu \in ID \ \ \mbox{iff}\ \   \phi_\mu(t)=\exp\big[ ita+\int_{\Rset}(e^{itx}-1-\frac{itx}{1+x^2})\frac{1+x^2}{x^2}m(dx)\big] \\
= \exp\big[ ita- \frac{1}{2}\sigma^2 t^2+ \int_{\Rset\setminus\{(0)\}}(e^{itx}-1-\frac{itx}{1+x^2})M(dx) \big], \ \ t\in\Rset,
\end{multline}
where parameters: a  real number $a$, a finite Borel measure $m$  and a L\'evy spectral measure $M$ are uniquely determined. In the latter case, we will simply  write $\mu=[a,\sigma^2,M]$.

For  the free-infinite divisibility here  is the Voiculescu  analogue of the L\'evy-Khintchine formula:
\begin{equation}
\nu\in(ID,\boxplus)\ \mbox{iff} \  V_{\nu}(it)=a +\int_{\Rset}\frac{1+ itx}{it-x}m(dx), \, t \neq 0,
\end{equation}
where the parameters: a real number $a$ and a finite Borel measure $m$ are uniquely determined; for details see Voiculescu (1986), Bercovici and Voicu-lescu (1993),  Barndorff-Nielsen and  S. Thorbjorsen (2006)  and Jurek (2006), (2007).

Uniqueness of  parameters $a$ and $m$ in the formulas (5) and (6) give an natural identification between classical and free-infinitely divisible transforms as it was already mentioned above. On the other hand, if one knows that $\phi_{\mu} \in (ID,\ast)$ or $V_{\nu}(it) \in (ID,\boxplus)$ finding their corresponding parameters $a$ and $m$ might be, in general, quite difficult. In that case, for $(ID,\ast)$ and $(ID,\boxplus)$ we use the identification given by (4).

\medskip
\medskip
\textbf{1. A  basic theorem.}

Here is a basic result which will allow us  to introduce effectively   new subclasses of free-infinitely divisible transforms  by specifying their corresponding random integral representations $I^{h,r}_{(a,b]}(\mu)$; see  (3), above.
\begin{thm}
For deterministic functions $h$ and $r$ on an interval $(a,b]$, let us define the constants \textbf{c}, \textbf{d} and the function \textbf{g}$_{\pm}$ depending on them (i.e., on  $h$ and $r$) as follows:
$$\textbf{c}:=\int_a^b  h(s)dr(s), \ \textbf{d}:=\int_a^b  h^2(s)dr(s), \ \  \textbf{g}_{\pm}(z):=\int_a^b\frac{h(s)}{z\,h(s)\pm 1}dr(s),  \ z  \in \Cset.$$
Then for $\mu=[a,\sigma^2,M] \in\mathcal{D}^{h,r}_{(a,b]}$  and $\rho:=I^{h,r}_{(a,b]}(\mu)$ there exists a counter part measure $\tilde{\rho}\in (ID,\boxplus)$ such that
\begin{equation}
V_{\tilde{\rho}}(it)= a\, \textbf{c} + (\pm) \frac{\sigma^2}{it}\,\textbf{d}+\int_{\Rset\setminus{(0)}}(\pm) x[\textbf{g}_{\pm}(\frac{ix}{t}) -  \frac{(\pm)\textbf{c}}{1+x^2}]M(dx), \ \ t>0,
\end{equation}
where, in the pair $ (\pm)$, the upper sign is for non decreasing $r$ and the lower sign for non increasing $r$, respectively.

Equivalently,by putting $ m(dx):=\frac{x^2}{1+x^2}M(dx)$ on $\Rset\setminus\{0\}$  and $m(\{0\}):=\sigma^2$ we get a finite measure $m$ such that
\begin{equation}
V_{\tilde{\rho}}(it)= a\, \textbf{c}+\int_{\Rset} (\pm)\big[\textbf{g}_{\pm}(\frac{ix}{t})- \frac{(\pm) \textbf{c}}{1+x^2}\big]\frac{1+x^2}{x}m(dx), \ \ t>0,
\end{equation}
where the integrand in (8) at zero is equal $ (\pm) \textbf{d}(it)^{-1}$.

Moreover, if $h(s)>0$, we have  the following relation between $\tilde{\rho}$ and $\tilde{\mu}$, the free-probability counterparts of $\rho$ and its background driving measure $\mu$, respectively.
\begin{equation}
V_{\tilde{\rho}}(it)=\int_a^bh(s)V_{\tilde{\mu}}(it/h(s))dr(s)=\int_a^bV_{\widetilde{T_{h(s)}\mu}}(it)dr(s), \  t>0,
\end{equation}
where $(T_c(\mu))(B):=\mu(c^{-1}B), c>0,$ for Borel sets $B$.

\end{thm}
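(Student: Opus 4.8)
The plan is to reduce everything to a computation on the classical side and transport it through the identification (4). Since $\rho=I^{h,r}_{(a,b]}(\mu)$ is a random integral of the infinitely divisible law $\mu=[a,\sigma^2,M]$ against a L\'evy process, $\rho$ is itself classically infinitely divisible; hence the free counterpart $\tilde{\rho}$ exists and lies in $(ID,\boxplus)$ automatically, because (4) is precisely the $(ID,\ast)\to(ID,\boxplus)$ isomorphism of Jurek (2007), Corollary 6. So the whole content is to evaluate $V_{\tilde{\rho}}(it)=it^2\int_0^\infty\log\phi_\rho(-u)e^{-tu}\,du$ explicitly. For nondecreasing $r$ I would start from the random-integral formula $\log\phi_\rho(u)=\int_a^b\log\phi_\mu(h(s)u)\,dr(s)$, insert the L\'evy--Khintchine exponent (5) for $\mu$, and split it into its drift, Gaussian and jump parts.

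Next I would apply (4) term by term, interchanging the $u$-, $s$- and $M$-integrations by Fubini and using $\int_0^\infty u^k e^{-tu}\,du=k!/t^{k+1}$. The drift part $-iua\,\textbf{c}$ of $\log\phi_\rho(-u)$ produces $a\,\textbf{c}$, and the Gaussian part $-\tfrac12\sigma^2u^2\textbf{d}$ produces $\frac{\sigma^2}{it}\textbf{d}$. For the jump part the decisive evaluation is $it^2\int_0^\infty e^{-iuh(s)x}e^{-tu}\,du=\frac{it^2}{t+ih(s)x}$; combining this with the images $-it$ and $-\frac{h(s)x}{1+x^2}$ of the two compensating terms and using $\frac{it^2}{t+ih(s)x}-it=\frac{t\,h(s)\,x}{t+ih(s)x}$ gives, for each fixed $s$ and $x$, the clean integrand $\frac{t\,h(s)\,x}{t+ih(s)x}-\frac{h(s)\,x}{1+x^2}$. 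Integrating against $dr(s)$ and recognizing $\int_a^b\frac{t\,h(s)}{t+ixh(s)}\,dr(s)=\textbf{g}_+(\tfrac{ix}{t})$ identifies the $M(dx)$-integrand as $x\big[\textbf{g}_+(\tfrac{ix}{t})-\tfrac{\textbf{c}}{1+x^2}\big]$, which is exactly (7) with the upper signs.

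For nonincreasing $r$ the mapping $I^{h,r}_{(a,b]}$ is built from the reversed time change, and repeating the same steps reproduces (7) with the lower signs and with $\textbf{g}_+$ replaced by $\textbf{g}_-$. The equivalent form (8) then follows algebraically: putting $m(dx)=\frac{x^2}{1+x^2}M(dx)$ on $\Rset\setminus\{0\}$ and $m(\{0\})=\sigma^2$ merges the Gaussian and jump contributions into a single integral over $\Rset$, and the value $(\pm)\textbf{d}(it)^{-1}$ of the integrand at $x=0$ drops out of the expansion $\textbf{g}_\pm(\tfrac{ix}{t})-\frac{(\pm)\textbf{c}}{1+x^2}=-(ix/t)\textbf{d}+O(x^2)$ as $x\to0$. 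Finally, when $h(s)>0$, the last assertion (9) is read off by the same argument organized measure-wise: applying (4) to $T_{h(s)}\mu$ (whose transform satisfies $V_{\widetilde{T_{h(s)}\mu}}(it)=h(s)V_{\tilde\mu}(it/h(s))$ and whose log-characteristic function is $\log\phi_\mu(h(s)u)$) and integrating in $dr(s)$ returns the same $V_{\tilde\rho}(it)$.

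I expect the main obstacle to be the analytic bookkeeping rather than the algebra: justifying the random-integral characteristic-function formula and the Fubini interchanges under the domain hypothesis $\mu\in\mathcal{D}^{h,r}_{(a,b]}$, controlling the convergence of the $M(dx)$-integral near $x=0$ (where the cancellation yielding $\frac{t\,h(s)\,x}{t+ih(s)x}-\frac{h(s)\,x}{1+x^2}$ is what keeps the integrand integrable against the L\'evy measure), and carrying the reversed-time sign conventions through consistently so that the single symbol $(\pm)$ and the switch $\textbf{g}_+\leftrightarrow\textbf{g}_-$ correctly encode both monotonicity cases.
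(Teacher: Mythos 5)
Your proposal is correct and follows essentially the same route as the paper's own proof: both transport the computation through the identification (4), insert the L\'evy--Khintchine exponent into the random-integral formula for $\log\phi_\rho$, apply Fubini with the gamma integrals $\int_0^\infty u^k e^{-tu}du=k!/t^{k+1}$ to produce $a\,\textbf{c}$, $\frac{\sigma^2}{it}\textbf{d}$ and the $\textbf{g}_{\pm}$ kernel, obtain (8) from the expansion $\textbf{g}_\pm(ix/t)-\frac{(\pm)\textbf{c}}{1+x^2}=-(ix/t)\textbf{d}+O(x^2)$, and get (9) via the scaling identity $V_{\widetilde{T_c\mu}}(it)=cV_{\tilde\mu}(it/c)$ (which the paper realizes by the substitution $w=h(s)u$). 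The only difference is presentational: the paper carries the $(\pm)$ signs through a single computation while you treat the nonincreasing case by symmetry, which is harmless.
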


\begin{proof}
The isomorphism  (4) gives one-to-one correspondence  between the classical $\phi_\mu(t)$ and the free-infinity divisible $V_{\tilde{\mu}}(it)$  transforms.  The law ( of the random  integral)\  $\rho=I^{h,r}_{(a,b]}(\mu)$ has the  characteristic function
\begin{equation}
  \log\phi_{\rho}(v)=\int_a^b\log\phi_{\mu}((\pm) h(s)v) (\pm) dr(s),
\end{equation}
where $(\pm)$ is for a non-decreasing and a non- increasing  function $r$, respectively.

\medskip
Since by (5),
 $\log\phi_{\mu}(t)=it a-\frac{\sigma^2}{2}t^2+ \int_{\Rset\setminus\{(0)\}}\big(e^{itx}-1- \frac{itx}{1+x^2}\big)M(dx),$
therefore, using the Fubini Theorem and (4),  we get

\begin{multline*}
V_{\tilde{\rho}}(it)=  it^2\int_0^\infty \log\phi_{\rho}(- u)e^{-tu}du\\ =   it^2\int_0^\infty \int_a^b \log\phi_{\mu}(- (\pm) h(s)u)(\pm)dr(s)\, \,e^{-tu}du\\ = \int_a^b it^2\int_0^\infty [  \log\phi_{\mu}(\mp h(s)u)\,e^{-tu}du]\,(\pm)dr(s) \\ = \int_a^b  i a(\mp h(s)) it^2\int_0^\infty u e^{-tu}du(\pm) dr(s)  \\ -
\int_a^b (\mp h(s))^2\frac{1}{2}\sigma^2it^2\int_0^\infty u^2e^{-tu}du(\pm)dr(s) \\ +
\int_a^b \int_{\Rset\setminus\{(0)\}} it^2\int_0^\infty\Big( e^{(\mp) ih(s)xu}-1 - \frac{x}{1+x^2}((\mp) ih(s) u)\Big) e^{-tu}duM(dx)(\pm) dr(s)\\=
\int_a^b  i a((\mp)h(s)) it^2 \frac{1}{t^2}(\pm) dr(s) -
\int_a^b (h(s))^2\frac{1}{2}\sigma^2it^2 \frac{2}{t^3}(\pm) dr(s)\\+
\int_a^b \int_{\Rset\setminus\{(0)\}} it^2 \Big( \frac{1}{t\pm ih(s)x}-\frac{1}{t} - \frac{i(\mp)h(s)x}{1+x^2}\frac{1}{t^2}\Big)M(dx)(\pm) dr(s)\\
= a\,\textbf{c} +\frac{\sigma^2}{it} (\pm) \textbf{d} +
\int_{\Rset\setminus\{(0)\}}\int_a^b[ \frac{(\pm)t\,h(s)x}{t\pm i xh(s)}+ \frac{x}{1+x^2}(\mp)h(s)](\pm) dr(s)M(dx)\\=
a\,\textbf{c} +\frac{\sigma^2}{it} (\pm) \textbf{d} + \int_{\Rset\setminus\{(0)\}}  (\pm)x\int_a^b [ \frac{h(s)}{1\pm i xh(s)/t}+ \frac{1}{1+x^2}(\mp)h(s)]dr(s)M(dx)\\=
a\textbf{c}+\frac{\sigma^2}{it} (\pm)\textbf{d}+ \int_{\Rset}(\pm)x\,[\textbf{g}_{\pm}(ix/t)- \frac{ (\pm)\textbf{c}}{1+x^2}]\,M(dx),
\end{multline*}
which proves the formula (7).

To get (8), let us  note that $g_{\pm}(0)= \pm \textbf{c}$  and
 $$\lim_{x\to 0} (\pm)[\frac{\textbf{g}_{\pm}(\frac{ix}{t})-\frac{(\pm)\textbf{c}}{1+x^2}}{x}]= (\pm)\lim_{x\to 0}\int_a^b\frac{(h(s))^2}{( ixh(s)/t \pm 1)^2}\,\,\frac{-i}{t}dr(s)= \frac{1}{it} (\pm) \textbf{d}.$$

Finally, similarly as above,  we have
\begin{multline*}
V_{\tilde{\rho}}(it)= \int_a^b it^2\int_0^\infty [  \log\phi_{\mu}(- h(s)u)\,e^{-tu}du]\,dr(s)\\= \int_a^b it^2\int_0^\infty [  \log\phi_{\mu}(- w)\,e^{-tw/h(s)}\frac{dw}{h(s)}]\,dr(s)=\int_a^b h(s)V_{\tilde{\mu}}(it/h(s) dr(s)\\=\int_a^b V_{\widetilde{T_{h(s)}\mu}}(it)dr(s), \ \mbox{as by (4),} \ \ V_{\widetilde{{T_c\mu}}}(it)= c V_{\tilde{\mu}}(it/c), \ c>0;
\end{multline*}
which completes a proof.

\end{proof}
 The kernel $\textbf{g}(z)$ from Theorem 1 admits the following representation:

\begin{lem}
For non-decreasing  $r$, functions $\textbf{g}(z):=\int_a^b\frac{h(s)}{zh(s)+1}dr(s)$ map upper half-plane of $\Cset^+$ into lower half-plane $\Cset^{-}$  are analytic ones  with  the  Pick - Nevanlinna  representation
$$
\textbf{g}(z)=\int_a^b\frac{h(s)}{1+ h^2(s)}dr(s)+ \int_{\Rset}\frac{1+zx}{z-x}(\int_a^b\frac{h^2(s)}{1+h^2(s)}\delta_{-1/h(s)}(dx)dr(s))
$$
\end{lem}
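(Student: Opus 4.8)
The plan is to reduce everything to a single elementary partial-fraction identity for each fixed $s$ and then integrate in $s$ against $dr$. First I would observe that, for $h(s)\neq 0$,
$$\frac{h(s)}{z\,h(s)+1}=\frac{1}{z-(-1/h(s))},$$
so that the integrand is, up to reparametrization, a simple pole located at the real point $x_0:=-1/h(s)$. (When $h(s)=0$ every term below vanishes, so that case is harmless.)

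Next I would use the standard decomposition of a simple real pole into its Pick--Nevanlinna pieces. Dividing $1+zx_0$ by $z-x_0$ gives
$$\frac{1+zx_0}{z-x_0}=x_0+\frac{1+x_0^2}{z-x_0},$$
and hence
$$\frac{1}{z-x_0}=-\frac{x_0}{1+x_0^2}+\frac{1}{1+x_0^2}\,\frac{1+zx_0}{z-x_0}.$$
Substituting $x_0=-1/h(s)$ turns the constant into $-x_0/(1+x_0^2)=h(s)/(1+h^2(s))$ and the coefficient of the kernel into $1/(1+x_0^2)=h^2(s)/(1+h^2(s))$. Reading the kernel term as an integral against a unit point mass, $\frac{1+zx_0}{z-x_0}=\int_{\Rset}\frac{1+zx}{z-x}\,\delta_{-1/h(s)}(dx)$, I obtain the pointwise identity
$$\frac{h(s)}{z\,h(s)+1}=\frac{h(s)}{1+h^2(s)}+\int_{\Rset}\frac{1+zx}{z-x}\,\frac{h^2(s)}{1+h^2(s)}\,\delta_{-1/h(s)}(dx).$$
I would then integrate this identity in $s$ over $(a,b]$ against $dr(s)$ and apply Fubini's theorem to interchange the $dx$ and $dr(s)$ integrations, which yields exactly the claimed representation, with the nonnegative (since $r$ is non-decreasing) Nevanlinna measure $\sigma(dx)=\int_a^b\frac{h^2(s)}{1+h^2(s)}\,\delta_{-1/h(s)}(dx)\,dr(s)$.

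Finally, for the mapping property I would compute, for $z=u+iv$ with $v>0$ and real $x$,
$$\mathrm{Im}\,\frac{1+zx}{z-x}=\mathrm{Im}\Big(x+\frac{1+x^2}{z-x}\Big)=-\frac{(1+x^2)\,v}{|z-x|^2}<0;$$
since the outer measure $\sigma$ is nonnegative and the additive constant is real, this forces $\mathrm{Im}\,\textbf{g}(z)<0$, i.e. $\textbf{g}(\Cset^+)\subset\Cset^-$. Analyticity on $\Cset^+$ follows because each integrand $z\mapsto h(s)/(z\,h(s)+1)$ is analytic off the real pole $-1/h(s)$ and one may differentiate under the integral sign. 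I expect the only genuine technical point to be the justification of Fubini and of differentiation under the integral: this rests on the finiteness of $\int_a^b\frac{h^2(s)}{1+h^2(s)}\,dr(s)$ (guaranteed by $r$ being a finite non-decreasing time change on $(a,b]$, together with the domain conditions on $\mu\in\mathcal{D}^{h,r}_{(a,b]}$) and on the local boundedness of $1/(z-x_0)$ on compact subsets of $\Cset^+$, which keep the interchanged integrals absolutely convergent.
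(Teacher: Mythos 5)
Your proposal is correct and follows essentially the same route as the paper: both reduce the kernel to the explicit Pick--Nevanlinna representation of a single real simple pole, $\frac{1}{z+b}=\frac{b}{1+b^2}+\int_{\Rset}\frac{1+zx}{z-x}\frac{1}{1+b^2}\delta_{-b}(dx)$ with $b=1/h(s)$ (your partial-fraction identity is exactly this), and then integrate in $s$ against $dr(s)$. The only cosmetic difference is that the paper verifies the mapping property $\Cset^+\to\Cset^-$ directly from the formula $\Im\,\textbf{g}(z)=-\Im(z)\int_a^b\frac{h^2(s)}{|1+zh(s)|^2}\,dr(s)$, whereas you deduce it afterwards from the representation; both computations are equivalent.
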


\begin{proof}
First, note that $\Im(\textbf{g}(z))=-\Im(z)\int_a^b\frac{h^2(s)}{|1+zh(s)|^2}dr(s)$, where the integral is positive as $r$ is non decreasing.
This means that $\textbf{g}:\Cset^+\to \Cset^{-}$  and is an analytic function with
$$ \frac{d^n}{dz^n}\textbf{g}(z)= (-1)^n n!\int_a^b\big(\frac{h(s)}{1+zh(s)}\big)^{n+1} dr(s), n=0,1,2,..$$

Second,  for $b \in \Rset$ we have  explicit Pick-Nevanlinna representation
$$\frac{1}{z+b}= u_b + \int_{\Rset}\frac{1+zx}{z-x}m_b(dx), \ \   u_b:= \frac{b}{1+b^2}, \ \  m_b(A):=\frac{1}{1+b^2}\delta_{-b}(A),$$
and hence taking $b:=1/h(s)$ and  integrating the above  with respect to $dr(s)$ we get
\begin{equation*}
\textbf{g}(z)=u+\int_{\Rset}\frac{1+zx}{z-x}m(dx), \ \mbox{where} \ \ u:=\int_a^b\frac{h(s)}{1+ h^2(s)}dr(s),
\end{equation*}
is the shift parameter and the measure $m$ is
\begin{equation*}
 m(A):=\int_a^b\frac{h^2(s)}{1+h^2(s)}\delta_A( -\frac{1}{h(s)}) dr(s)\\ =\int_a^b\frac{h^2(s)}{1+h^2(s)}\delta_{-\frac{1}{h(s)}}(A) dr(s),
\end{equation*}
is a mixture of the point-mass Dirac measures, $k(s)\delta_{f(s)}(A)$  which gives the proof of  Lemma 1.
\end{proof}

\medskip
\textbf{2. Classes $(\mathcal{U}^{<k>},\boxplus)$ of free-infinitely divisible transforms (measures) for $ k=1, 2, ...$.}

\medskip
\medskip
For an one-parameter semigroup $(U_r,r>0)$ of  non-linear \emph{shrinking operations} (in short: \emph{s-operations}) defined as follows
$$
U_r:\Rset\to \Rset \ \mbox{ as} \  \ U_r(0):=0, \ \  U_r(x): = \max\{|x|-r,0\}\frac{x}{|x|},  \ \ x \neq 0; r>0,
 $$
in Jurek (1977,1981) was introduced the class $\mathcal{U}$ of limiting distributions  of sequences
\begin{equation*}
U_{r_{n}}(X_1)+U_{r_{n}}(X_1)+...+U_{r_{n}}(X_n)+x_n,
\end{equation*}
where terms $U_{r_n}(X_j), \ 1\leq j\leq n$ , are uniformly infinitesimal and random variables $ X_n, n=1,2,...$  are stochastically independent. Measures  $\mu\in \mathcal{U}$ were termed as \emph{s-selfdecomposable measures.}
\begin{rem}
\emph{Note that nowadays, in the mathematical finance,  for $X>0$,  the s-operation $U_r(X)=(X-r)_+$ is called \emph{the European call option} on a stock $X$ with an exercise price $r$.}
\end{rem}
In Jurek (2004) were introduced and characterized the following  subclasses of the class $(ID,\ast)$ of the classical infinitely divisible measures:
\begin{equation*}
 ... \subset \mathcal{U}^{<k+1>} \subset \mathcal{U}^{<k>}\subset ...\subset \mathcal{U}^{<1>}\equiv \mathcal{U}\subset ID,
\end{equation*}
and the measures $\mu\in \mathcal{U}^{<k>}$  were called \emph{ k-times s-selfdecomposable measures}. Furthermore, as mentioned in the Introduction, taking  the time change
\begin{equation}
\tau_k(t):=\frac{1}{(k-1)!}\int_0^t (-\log v)^{k-1}dv, \ \mbox{we get}   \ ( \mathcal{U}^{<k>},\ast)=I^{t,\tau_k(t)}_{(0,1]}(ID),
\end{equation}
and  $ I^{t,\tau_k(t)}_{(0,1]}(\nu)= I^{t,t}_{(0,1]}(I^{t,t}_{(0,1]}(...(I^{t,t}_{(0,1]}(\nu)))$,  (k-times);    see Jurek (2004), Proposition 4 and Corollary 2 and for more general theory of compositions of random integrals see Jurek (2018).

Here are the free-infinity  divisible counterparts of k-times s-selfdecomposable probability measures:
\begin{prop}
For $ k= 1,...$, a  measure $\tilde{\nu}$ is a free-probability counterpart of $\nu=[a,\sigma^2,M]\in(\mathcal{U}^{<k>},\ast)$, that is $\tilde{\nu}\in (\mathcal{U}^{<k>}
,\boxplus)$,   if and only if
\begin{equation}
V_{\tilde{\nu}}(it)=\frac{a}{2^{k}}+\frac{\sigma^2}{3^{k}}\frac{1}{it}+ \int_{\Rset\setminus{(0)}} x [\Phi(\frac{x}{i t},k,2)  -\frac{1}{1+x^2}\frac{1}{2^{k}}]M(dx).
\end{equation}
Equivalently,
\begin{equation}
V_{\tilde{\nu}}(it)=\frac{a}{2^{k}}+ \int_{\Rset} [\Phi(\frac{x}{i t},k,2)  -\frac{1}{1+x^2}\frac{1}{2^{k}}] \frac{1+x^2}{x} m(dx),
\end{equation}
where $a\in\Rset$, $m(dx):=\frac{x^2}{1+x^2}M(dx)$ on $\Rset\setminus{\{0\}}$ and $ m({0}):=\sigma^2$, is finite Borel measure $m$  and
$ \Phi(z,s,v):=\sum_{n=0}^\infty \frac{z^n}{(v+n)^s}   , \ \ |z|< 1,  v\neq 0, -1,-2,... $
 is the  Hurwitz-Lerch function. Finally, the integrand in (13)  at zero  is equal to  $(3^{k} it)^{-1}$.
\end{prop}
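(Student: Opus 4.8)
The plan is to obtain this proposition as a direct specialization of Theorem 1 to the random integral mapping that represents the class $(\mathcal{U}^{<k>},\ast)$. By the representation (11) we have $(\mathcal{U}^{<k>},\ast)=I^{t,\tau_k(t)}_{(0,1]}(ID)$, so $\nu=[a,\sigma^2,M]$ belongs to $(\mathcal{U}^{<k>},\ast)$ exactly when $\nu=I^{h,r}_{(0,1]}(\mu)$ with the identity scaling $h(s)=s$ and the time change $r(s)=\tau_k(s)$ on $(0,1]$; since the isomorphism (4) is a bijection of $(ID,\ast)$ onto $(ID,\boxplus)$, the class $(\mathcal{U}^{<k>},\boxplus)$ is its image and the asserted ``iff'' amounts to writing out the right-hand side of (7) for this particular pair $(h,r)$. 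As $d\tau_k(s)=\frac{1}{(k-1)!}(-\log s)^{k-1}ds$ makes $\tau_k$ non-decreasing, I would use the upper sign throughout.

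The computational core is to evaluate the three quantities $\textbf{c}$, $\textbf{d}$ and $\textbf{g}_{+}$ of Theorem 1 for this choice. The substitution $u=-\log s$ (so $s=e^{-u}$, $ds=-e^{-u}du$, and $(0,1]$ becomes $[0,\infty)$) converts every integral against $d\tau_k$ into a Laplace-type integral over $(0,\infty)$ with weight $\frac{1}{(k-1)!}u^{k-1}$. Using $\int_0^\infty u^{k-1}e^{-mu}du=(k-1)!/m^{k}$, this gives at once $\textbf{c}=\int_0^1 s\,d\tau_k(s)=1/2^{k}$ and $\textbf{d}=\int_0^1 s^2\,d\tau_k(s)=1/3^{k}$, which account for the first two terms of (12) and, via (8), for the value $(3^{k}it)^{-1}$ of the integrand at zero.

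The key step is identifying the kernel with the Hurwitz--Lerch function. The same substitution yields $\textbf{g}_{+}(z)=\frac{1}{(k-1)!}\int_0^\infty\frac{u^{k-1}e^{-2u}}{1+ze^{-u}}du$; expanding $\frac{1}{1+ze^{-u}}=\sum_{n=0}^\infty(-z)^n e^{-nu}$ and integrating term by term produces $\textbf{g}_{+}(z)=\sum_{n=0}^\infty\frac{(-z)^n}{(n+2)^{k}}=\Phi(-z,k,2)$. Since $ix/t=-x/(it)$, this gives $\textbf{g}_{+}(ix/t)=\Phi(x/(it),k,2)$, the kernel appearing in (12) and (13). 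Substituting $\textbf{c}$, $\textbf{d}$ and this kernel into (7) and into the equivalent form (8) then yields (12) and (13) respectively.

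The main obstacle I anticipate is the legitimacy of the term-by-term integration: the geometric expansion of $(1+ze^{-u})^{-1}$ converges only for $|ze^{-u}|<1$, hence only for $|z|<1$, whereas the argument $ix/t$ has modulus $|x|/t$ which exceeds $1$ for $|x|>t$. I would resolve this by establishing $\textbf{g}_{+}(z)=\Phi(-z,k,2)$ first on the disc $|z|<1$ and then extending by analytic continuation: the integral $\frac{1}{(k-1)!}\int_0^\infty\frac{u^{k-1}e^{-2u}}{1+ze^{-u}}du$ is, by the standard integral representation of the Hurwitz--Lerch transcendent (and consistently with the Pick--Nevanlinna analyticity recorded in Lemma 1), analytic on $\Cset\setminus(-\infty,-1]$ and agrees there with the continuation of $\Phi(-z,k,2)$; in particular the identity holds at every purely imaginary $z=ix/t$, which is all that (12) and (13) require.
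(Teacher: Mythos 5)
Your proposal is correct and follows essentially the same route as the paper: specialize Theorem 1 to $h(s)=s$, $r=\tau_k$ on $(0,1]$ (upper sign, $\tau_k$ non-decreasing), compute $\textbf{c}=2^{-k}$, $\textbf{d}=3^{-k}$, and identify $\textbf{g}_{+}(z)=\Phi(-z,k,2)$ via the substitution $s=e^{-u}$. The only difference is one of justification at the last step: where you establish the identity by geometric-series expansion on $|z|<1$ followed by analytic continuation to $\Cset\setminus(-\infty,-1]$, the paper simply quotes the integral representation $\Phi(z,s,v)=\frac{1}{\Gamma(s)}\int_0^\infty \frac{t^{s-1}e^{-vt}}{1-ze^{-t}}\,dt$ from Gradshteyn--Ryzhik, formula (9.556), whose stated validity conditions already cover the purely imaginary arguments $z=ix/t$ needed in (12) and (13).
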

\begin{proof}
Taking into account (11) and Theorem 1, we get the constants  $\textbf{c}=2^{-k}$ and $\textbf{d}=3^{-k}$.
Furthermore, to find $\textbf{g}(z)$ we quote  from Gradshteyn and Ryzhik (1994), formula $(\textbf{9.556})$  that Hurwitz-Lerch function admits the integral  representations:

 if $ \Re v>0, or \, |z|\le 1, z \neq 1,\Re s>0, or \  z=1, \Re s>1$ then
\begin{equation*}
\Phi(z,s,v)=\frac{1}{\Gamma(s)}\int_0^\infty \frac{t^{s-1}e^{-vt}}{1- z e^{-t}}dt.
\end{equation*}
Hence
\begin{multline*}
\textbf{g}(z)=\frac{1}{(k-1)!}\int_0^1 \frac{s (- \log s)^{k-1}}{1+zs} ds \\ = \frac{1}{(k-1)!} \int_0^\infty \frac{w^{k-1} e^{-2w}}{1+zw}dw=\Phi(-z,k, 2),
\end{multline*}
which completes the proof of Proposition 1.
\end{proof}
\begin{rem}
\emph{
(i). \ For the class $(\mathcal{U}^{<1>},\boxplus)$ of  free  s-selfdecomposable measures we may use the  identity
$\Phi(-ix/t,1,2)=it(-x-it\log(1+ix/t))$.
The characterization of the class $ (\mathcal{U}^{<1>} \boxplus)\equiv (\mathcal{U},\boxplus)$ was earlier given in Jurek (2016), Proposition 1(b). Note a misprint there: it should be $t^2$, not $(it)^2$ in the part (b).}

\emph{ (ii). \ Putting $k=0$ in Propositon 1, we get that $(\mathcal{U}^{<0>},\boxplus)\equiv (ID,\boxplus)$, because of the formula (6).
}
\end{rem}
Here are relations between  consecutive classes $\mathcal{U}^{<k>},\boxplus)$:
\begin{cor}
Let us define a differential operator $\mathbb{D}f(t):= 2 f(t) -t \frac{d}{dt}f(t)$. Then for $k\ge 1$
\[
\mathbb{D}: (\mathcal{U}^{<k>)},\boxplus) \to (\mathcal{U}^{<k-1>},\boxplus), \ \mbox{where} \ \
  (\mathcal{U}^{<0>},\boxplus):= (ID,\boxplus).
\]
Hence $\mathbb{D}^{k}: (\mathcal{U}^{<k>},\boxplus)\to (ID,\boxplus).$
\end{cor}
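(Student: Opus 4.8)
The plan is to verify the claimed action of the operator $\mathbb{D}$ on the explicit Voiculescu transforms by a direct differentiation, using the closed-form expression for $V_{\tilde\nu}(it)$ established in Proposition 1. Recall from (12) that for $\tilde\nu \in (\mathcal{U}^{<k>},\boxplus)$ the transform is built from three pieces scaled by $2^{-k}$, $3^{-k}$ and the Hurwitz-Lerch kernel $\Phi(x/(it),k,2)$; the target class $(\mathcal{U}^{<k-1>},\boxplus)$ has the same structure with $k$ replaced by $k-1$. So it suffices to show that applying $\mathbb{D}f(t)=2f(t)-t\frac{d}{dt}f(t)$ to each of these three constituents advances the index $k \mapsto k-1$.

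First I would check the two simple terms. For the constant piece, $\mathbb{D}$ applied to $a\,2^{-k}$ gives $2\cdot a\,2^{-k}=a\,2^{-(k-1)}$, since the $t$-derivative vanishes; this is exactly the constant appearing in the $(k-1)$-level transform. For the Gaussian piece $\frac{\sigma^2}{3^k}\frac{1}{it}$, one computes $t\frac{d}{dt}(it)^{-1}=-(it)^{-1}$, so $\mathbb{D}$ multiplies the term by $2-(-1)=3$, turning $3^{-k}$ into $3^{-(k-1)}$. The only substantive step is the spectral integral: here I would interchange $\mathbb{D}$ with $\int_{\Rset\setminus(0)}$ (justified by dominated convergence, using the analyticity and the integrability built into the domain $\mathcal{D}^{h,r}_{(0,1]}$) and reduce everything to showing
\[
\mathbb{D}\,\Phi\!\left(\tfrac{x}{it},k,2\right)=\Phi\!\left(\tfrac{x}{it},k-1,2\right).
\]

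The hard part will be this identity for the Hurwitz-Lerch kernel, and I expect it to be the main obstacle. The cleanest route is to differentiate termwise the defining series $\Phi(z,k,2)=\sum_{n\ge0}z^n/(n+2)^k$ with $z=x/(it)$. Since $t\frac{d}{dt}z=-z$, one has $t\frac{d}{dt}z^n=-n\,z^n$, so
\[
\mathbb{D}\,\Phi(z,k,2)=\sum_{n\ge0}\frac{(2+n)\,z^n}{(n+2)^k}=\sum_{n\ge0}\frac{z^n}{(n+2)^{k-1}}=\Phi(z,k-1,2),
\]
which is precisely the needed relation and lets the factor $(n+2)$ collapse one power of the denominator. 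I would also note that $\mathbb{D}$ respects the correction term $-\frac{1}{1+x^2}2^{-k}$, since that term is handled by the constant computation above, so the full integrand transforms correctly and the zero-value $(3^k it)^{-1}$ maps to $(3^{k-1}it)^{-1}$ by the Gaussian computation.

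Finally, I would record that the $k=1$ case lands in $(\mathcal{U}^{<0>},\boxplus)=(ID,\boxplus)$ by Remark (ii), and that iterating the single-step claim $k$ times yields $\mathbb{D}^{k}\colon(\mathcal{U}^{<k>},\boxplus)\to(ID,\boxplus)$, completing the corollary. The one point requiring a little care is confirming that $\mathbb{D}$ maps the class \emph{onto} (or at least into) the next class rather than merely producing the correct formula: since Proposition 1 characterizes membership precisely by the form of the transform, and $\mathbb{D}$ preserves both the parameter $a$ and the underlying measure $M$ (hence $m$), the image automatically satisfies the defining representation at level $k-1$, which is what the inclusion requires.
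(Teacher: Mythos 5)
Your proof is correct and takes essentially the same route as the paper's: apply $\mathbb{D}$ separately to the constant, Gaussian, and Hurwitz--Lerch pieces of the representation (12) and check that each advances $k\mapsto k-1$. The only difference is that where the paper quotes the derivative identity $\frac{d}{dt}\Phi(\frac{x}{it},k,2)=-t^{-1}\bigl(\Phi(\frac{x}{it},k-1,2)-2\Phi(\frac{x}{it},k,2)\bigr)$ from Wolframalpha, you derive the equivalent relation $\mathbb{D}\,\Phi(z,k,2)=\Phi(z,k-1,2)$ by termwise differentiation of the defining series using $t\frac{d}{dt}z^n=-nz^n$, which makes the argument self-contained.
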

\emph{Proof.}
 Let  $V_{\tilde{\nu}}(it)= \frac{a}{2^{k}}+\frac{1}{3^{k}}\frac{\sigma^2}{it}\in (\mathcal{U}^{<k>},\boxplus)$. Then
\[
 \mathbb{D}(V_{\tilde{\nu}}(it))=\frac{a}{2^{k-1}}+\frac{2\sigma^2}{3^{k}}\frac{1}{it} - t\frac{\sigma^2}{3^{k}}(-1)i(it)^{-2}= \frac{a}{2^{k-1}}+\frac{1}{3^{k-1}}\frac{\sigma^2}{it}\in (\mathcal{U}^{<k-1>},\boxplus).
\]
Since by Wolframalpha.com
$$ \frac{d}{dt}[\Phi(\frac{x}{it},k,2)]=-t^{-1}\big( \Phi(\frac{x}{it}, k-1, 2)- 2\Phi(\frac{x}{it}, k,2)  \big)$$
therefore for  the Poisson part in (12) we have
\begin{multline*}
\mathbb{D}[\Phi(\frac{x}{it},k,2)-\frac{1}{2^{k}}\frac{1}{1+x^2}]=2\Phi(\frac{x}{it},k,2)  -\frac{1}{2^{k-1}}\frac{1}{1+x^2} \\  -t \frac{d}{dt}(\Phi(\frac{x}{it},k,2))=2\Phi(\frac{x}{it},k,2)  -\frac{1}{2^{k-1}}\frac{1}{1+x^2}+\Phi(\frac{x}{it},k-1,2) \\ -2\Phi(\frac{x}{it},k,2) =\Phi(\frac{x}{it},k-1,2)-\frac{1}{2^{k-1}}\frac{1}{1+x^2}, \qquad \qquad \qquad
\end{multline*}
which is the kernel in (12) corresponding to the free-infinitely divisible measure in $(\mathcal{U}^{<k-1>},\boxplus)$.
This completes  a proof of Corollary 1.

\medskip
\textbf{3. Classes $(\mathbb{U}_k, \boxplus)$ of free-infinitely divisible transforms for $ k= 0, 1,2,...$. }

\medskip
For a fixed  $k$,  a probability measure $\mu$ is in  $(\mathbb{U}_k,\ast)$, if there exists a sequence $\nu_n\in (ID,\ast),  n=1,2,...$ such that
\begin{equation}
\rho_n:= T_{\frac{1}{n}} (\nu_1\ast\nu_2\ast ...\ast\nu_n)^{\ast n^{-k}}   \Rightarrow \mu, \ \ \mbox{as} \ \ n\to \infty;
\end{equation}
see  Jurek (1988), Theorem 1.1  and Corollary 1.1. Take there, an operator $Q=I$,  Borel measures $\nu_k$ on the real line and a parameter $\beta=k$.

A class of all possible limits  in (14) is  denoted by $(\mathbb{U}_k,\ast)$  and  measures $\mu\in (\mathbb{U}_k,\ast)$ are referred to as \emph{k-times  s-selfdecomposable measures}. Note that for $k=1$  we get  the class $(\mathbb{U},\ast)$ of s-selfdecomposable measures.

Furthermore,  subclasses $(\mathbb{U}_k,\ast)$  form  an increasing filtration of whole class of  infinitely divisible measures, and all subclasses  admit  random integral representations (see (15) below). Namely,
\begin{multline}
 \mbox{if} \  0  \le k \le l \  \mbox{then} \  (\mathbb{U}_{0},\ast) \subset (\mathbb{U}_k,\ast) \subset (\mathbb{U}_l,\ast) \subset (ID,\ast); \ \mbox{in particular,} \\
  (\mathbb{U}_0,\ast)\equiv (L_0,\ast)\ (\mbox{selfdecomposable measures; see Section 4,below});  \\  \ (\mathbb{U}_1,\ast) \equiv (\mathcal{U}^{<1>},\ast); (\mbox{s-selfdecomposable measures; see Section 2, above}); \\
(\mathbb{U}_k,\ast)= I^{t,t^{k}}_{(0,1]}(ID); \ \ \mbox{and} \ \overline{\cup_{k=1}^\infty(\mathbb{U}_{k},\ast)}= (ID, \ast); \qquad
\end{multline}

\medskip
Here are transforms of  free-infinitely divisible counterparts of measures from  classes $(\mathbb{U}_k,\ast)$:
\begin{prop}
For $k\ge 1$,  a  measure $\tilde{\nu}$ is a free-probability counterpart of $\nu=[a,\sigma^2,M]\in(\mathbb{U}_k,\ast)$, that is $\tilde{\nu}\in (\mathbb{U}_k,\boxplus)$,   if and only if for $t>0$
\begin{multline}
V_{\tilde{\nu}}(it)=\frac{k}{k +1} \,a + \frac{k}{k+2}\frac{\sigma^2}{it}+\int_{\Rset\setminus\{0\}}[ k\, it  \Phi(\frac{x}{it},1,k)- it - \frac{k}{k+1}\frac{x}{1+x^2}]M(dx) \\ =\frac{k}{k +1} \,a  +\int_{\Rset}[ k \, it \big( \Phi(\frac{x}{it},1,k)-k^{-1}\big) - \frac{k}{k+1}\frac{x}{1+x^2}]\frac{1+x^2}{x^2}m(dx)
\end{multline}
where $M$ is arbitrary L\'evy measure and a measure $m$ defined as $m(dx):=\frac{x^2}{1+x^2}M(dx)$ on $\Rset\setminus\{0\}$, and  $m(\{0\}):=\sigma^2$,  is a finite measure and the integrand in (16) at zero is $\frac{k}{k+2}\frac{1}{it}$.

\emph{[$\Phi(z,s,v)$ is the Hurwitz-Lerch function.]}
\end{prop}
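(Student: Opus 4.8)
The plan is to apply Theorem 1 to the random integral representation of $(\mathbb{U}_k,\ast)$ recorded in (15), namely $(\mathbb{U}_k,\ast)=I^{t,t^{k}}_{(0,1]}(ID)$. Thus I take $h(s)=s$ and $r(s)=s^{k}$ on the interval $(0,1]$, so that $dr(s)=k\,s^{k-1}ds$; since the interval is finite and $h$ is bounded, the domain $\mathcal{D}^{s,s^{k}}_{(0,1]}$ is all of $ID$, which is why $M$ may be taken arbitrary. As $r$ is non-decreasing, only the upper sign in (7)--(8) is relevant. First I would compute the two constants, which are elementary:
$$\textbf{c}=\int_0^1 s\cdot k s^{k-1}ds=\frac{k}{k+1},\qquad \textbf{d}=\int_0^1 s^2\cdot k s^{k-1}ds=\frac{k}{k+2}.$$
These already produce the leading coefficients $\frac{k}{k+1}a$ and $\frac{k}{k+2}(it)^{-1}$ in (16), together with the value $\frac{k}{k+2}(it)^{-1}$ of the integrand at zero via the formula $(\pm)\textbf{d}(it)^{-1}$ from Theorem 1.

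The crux is to evaluate the kernel $\textbf{g}_+(z)=\int_0^1\frac{s}{zs+1}\,k s^{k-1}ds=k\int_0^1\frac{s^{k}}{1+zs}ds$ and to match it with the Hurwitz-Lerch function. I would parallel the proof of Proposition 1 and use the specialization $\Phi(w,1,k)=\int_0^1\frac{u^{k-1}}{1-wu}du$, obtained from formula $(\textbf{9.556})$ of Gradshteyn and Ryzhik after the substitution $u=e^{-t}$. The power $s^{k}$ in the numerator does not match the $s^{k-1}$ of that representation, so the key algebraic step is the splitting
$$\frac{s}{1+zs}=\frac1z\Big(1-\frac{1}{1+zs}\Big),$$
which reduces $\textbf{g}_+$ to $\frac{k}{z}\int_0^1 s^{k-1}ds-\frac{k}{z}\int_0^1\frac{s^{k-1}}{1+zs}ds=\frac1z-\frac{k}{z}\Phi(-z,1,k)$. (A self-contained alternative is to expand $(1+zs)^{-1}$ as a geometric series, integrate term by term, and reindex the resulting sum $k\sum_{n\ge0}(-z)^n/(k+n+1)$ to recover $\Phi(-z,1,k)$.)

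Finally I would substitute $z=ix/t$ into this expression and insert it into (7). Using $\frac{x}{it}=-\frac{ix}{t}$ one gets $\Phi(-z,1,k)=\Phi(\frac{x}{it},1,k)$, and after multiplying by $x$ and simplifying $1/i=-i$ the Poisson integrand becomes $k\,it\,\Phi(\frac{x}{it},1,k)-it-\frac{k}{k+1}\frac{x}{1+x^2}$, which is exactly the kernel in the first line of (16). The equivalent second line then follows mechanically from form (8) of Theorem 1 under the substitution $m(dx)=\frac{x^2}{1+x^2}M(dx)$ on $\Rset\setminus\{0\}$ and $m(\{0\})=\sigma^2$, after grouping $k\,it\,\Phi-it=k\,it(\Phi-k^{-1})$ and pulling out the factor $\frac{1+x^2}{x^2}$. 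I expect the only genuinely delicate point to be the identification of $\textbf{g}_+$ with $\Phi(\cdot,1,k)$ — in particular the reduction of the numerator power from $s^{k}$ to $s^{k-1}$ and the careful bookkeeping of the factors of $i$ when passing from $z$ to $ix/t$; everything else is routine integration and substitution.
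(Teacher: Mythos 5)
Your proof is correct, and its skeleton --- apply Theorem 1 with $h(s)=s$, $r(s)=s^{k}$ on $(0,1]$, compute $\textbf{c}=\frac{k}{k+1}$, $\textbf{d}=\frac{k}{k+2}$, then identify the kernel $\textbf{g}_+$ and substitute $z=ix/t$ --- is the same as the paper's. The genuine difference is in the key identification of $\textbf{g}_+(z)=k\int_0^1\frac{s^{k}}{1+zs}\,ds$ with the Hurwitz-Lerch function. The paper first invokes Gradshteyn--Ryzhik $\textbf{3.194}(5)$ to write this integral as $\frac{k}{k+1}\,{_2F_1}(1,k+1;k+2;-z)$, then expands the hypergeometric series, telescopes the Pochhammer ratio $(k+1)_n/(k+2)_n=(k+1)/(k+n+1)$, and reindexes the resulting sum to arrive at $k(-z)^{-1}\bigl[\Phi(-z,1,k)-k^{-1}\bigr]$. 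You instead use the elementary splitting $\frac{s}{1+zs}=\frac1z\bigl(1-\frac{1}{1+zs}\bigr)$ to lower the numerator power from $s^{k}$ to $s^{k-1}$, and then recognize $\int_0^1\frac{s^{k-1}}{1+zs}\,ds=\Phi(-z,1,k)$ directly from the integral representation $\textbf{9.556}$ (the same one the paper uses in its Proposition 1), obtaining $\textbf{g}_+(z)=\frac1z-\frac kz\Phi(-z,1,k)=k(-z)^{-1}\bigl[\Phi(-z,1,k)-k^{-1}\bigr]$ --- the identical expression --- without ever passing through ${_2F_1}$; your parenthetical geometric-series alternative is essentially the paper's series computation stripped of the hypergeometric label. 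Your route is more self-contained and elementary; the paper's route has the side benefit of exhibiting the kernel explicitly as a hypergeometric function, which is one of the special functions advertised in the abstract. The remaining steps (substitution $z=ix/t$ with the sign bookkeeping $x/(it)=-ix/t$, insertion of $\textbf{c}$, $\textbf{d}$, and the passage to the finite measure $m$ via form (8)) agree with the paper; you additionally make explicit why $M$ may be arbitrary (finite interval, bounded $h$), a point the paper leaves implicit in (15).
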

\begin{proof}
Since $\mathbb{U}_k=I^{t,t^k}_{(0,1]}(ID)$ therefore we take $a=0,b=1$, $h(t)=t$ and $r(t)=t^k$ in  Theorem 1. Thus $ \textbf{c}=k/(k+1),  \ \textbf{d}=k/(k+2)$ and
$$
\textbf{g}_+(z) = k \int_0^1 \frac{s^k}{1+z s} ds= \frac{k}{k +1}\,{_2F_1}(1,k+1; k+2;- z); \ \mbox{by \textbf{3.194}(5)},
$$
 in Gradshteyn and Ryzhik (1994),($|arg(1+z)|<\pi$) and ${_2F_1}$ denotes \emph{the hypergeometric function}.
It is defined as
$$
{_2F_1}(a,b; c; z): = \sum_{n=0}^\infty\frac{(a)_n(b)_n}{(c)_n}\,\frac{z^n}{n!}, \  \ (x)_n:=x(x+1)...(x+n-1), \ c \neq -\mathbb{N};
$$
where $(x)_n$ is \emph{the  Pochhammer symbol} with the convention $(x)_0:=1$.

Consequently, for the kernel $\textbf{g}_{+}(z)$ we have
\begin{multline*}
 \textbf{g}_+(z) = \frac{k}{k +1}\,{_2F_1}(1,k+1; k+2;- z)\\ =
k (k+1)^{-1}\sum_{n=0}^\infty\frac{(1)_n (k+1)_n}{(k+2)_n} \frac{(-z)^n}{n!}=k \sum_{n=0}^\infty\ \frac{(-z)^n}{k +n+1}= k (-z)^{-1}\sum_{j=1}^\infty \ \frac{(-z)^{j}}{k+j} \\ = k
(-z)^{-1} [\sum_{n=0}^\infty\frac{(-z)^n}{k+n} -\frac{1}{k}]= k (-z)^{-1}[\Phi (-z, 1, k)-k^{-1}];
\end{multline*}
and $\Phi(z, s, a)$ is the Hurwitz-Lerch function.
Finally, we have
$
x\textbf{g}(\frac{ix}{t})= it k ( \Phi(x/(it),1,k)- k^{-1})
$
and this completes a proof of Proposition 2.
\end{proof}

\begin{cor}
If  $\tilde{\nu_k}\in (\mathbb{U}_k,\boxplus)$ then $$\lim_{k \to \infty} V_{\tilde{\nu_k}}(it)= a+ \int_{\Rset}\frac{1+itx}{it-x}m(dx)=V(it) \in (ID,\boxplus),\ \ \mbox{for}\ \  t>0.$$

\noindent In other words, $\overline{\cup_{k=1}^\infty (\mathbb{U}_k, \boxplus)}=(ID,\boxplus)$.
\end{cor}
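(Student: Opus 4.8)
The plan is to pass to the limit $k\to\infty$ inside the integral representation of Proposition 2. I find it cleanest to start not from the explicit Hurwitz--Lerch formula (16) but from the equivalent relation (9) of Theorem 1, applied with a common background driving measure $\mu$ (equivalently, with common parameters $a$ and $m$) for the whole sequence $\tilde{\nu_k}$. Taking $h(s)=s$ and $r(s)=s^k$ on $(0,1]$, so that $dr(s)=ks^{k-1}ds$ and $V_{\tilde{\mu}}(it)=a+\int_{\Rset}\frac{1+itx}{it-x}m(dx)$ by (6), relation (9) gives
\[
V_{\tilde{\nu_k}}(it)=\int_0^1 V_{\widetilde{T_s\mu}}(it)\,ks^{k-1}\,ds=k\int_0^1 s^k\,V_{\tilde{\mu}}(it/s)\,ds,\qquad t>0.
\]
Writing $w_k(s):=(k+1)s^k$, a probability density on $(0,1]$ concentrating at the right endpoint $s=1$, the right side equals $\frac{k}{k+1}\int_0^1 w_k(s)\,V_{\tilde{\mu}}(it/s)\,ds$. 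Since $\frac{k}{k+1}\to1$ and $s\mapsto V_{\tilde{\mu}}(it/s)$ is continuous at $s=1$ with value $V_{\tilde{\mu}}(it)$, an approximate-identity argument would then deliver $V_{\tilde{\nu_k}}(it)\to V_{\tilde{\mu}}(it)=a+\int_{\Rset}\frac{1+itx}{it-x}m(dx)$, which is the asserted limit and lies in $(ID,\boxplus)$ by (6).

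The step I expect to be the main obstacle is controlling $V_{\tilde{\mu}}(it/s)$ as $s\downarrow0$, i.e. the growth of $V_{\tilde{\mu}}(iw)$ as $w\to\infty$, so that the mass of $w_k$ near $s=0$ does no harm. From (6) one has $|V_{\tilde{\mu}}(iw)|\le|a|+\int_{\Rset}|\frac{1+iwx}{iw-x}|\,m(dx)$, and a direct estimate of $|\frac{1+iwx}{iw-x}|^2=\frac{1+w^2x^2}{w^2+x^2}$ (monotone in $x^2$, with supremum $w^2$) shows this modulus is bounded by $w$ for $w\ge1$. Hence $|V_{\tilde{\mu}}(iw)|\le|a|+w\,m(\Rset)$ grows at most linearly, so $|V_{\tilde{\mu}}(it/s)|\le|a|+(t/s)m(\Rset)$ for small $s$. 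The tail then satisfies $\int_0^\delta w_k(s)|V_{\tilde{\mu}}(it/s)|\,ds\le\int_0^\delta(k+1)s^k(|a|+(t/s)m(\Rset))\,ds$, which tends to $0$ as $\delta\downarrow0$ uniformly in $k\ge1$; the only delicate term, $\int_0^\delta(k+1)s^{k-1}ds=\frac{k+1}{k}\delta^k\le2\delta^k$, vanishes. Splitting $(0,1]=(0,\delta]\cup(\delta,1]$ and using uniform continuity of $V_{\tilde{\mu}}(it/s)$ on $[\delta,1]$ makes the approximate-identity step rigorous.

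As a cross-check one may instead pass to the limit directly in (16). Using $\Phi(z,1,k)=\sum_{n\ge0}z^n/(k+n)$ gives $k(\Phi(z,1,k)-k^{-1})=\sum_{n\ge1}\frac{k}{k+n}z^n\to\sum_{n\ge1}z^n=\frac{z}{1-z}$; with $z=x/(it)$ this yields $k\,it(\Phi(\frac{x}{it},1,k)-k^{-1})\to\frac{itx}{it-x}$, and a short computation shows the full integrand $[k\,it(\Phi(\frac{x}{it},1,k)-k^{-1})-\frac{k}{k+1}\frac{x}{1+x^2}]\frac{1+x^2}{x^2}$ converges pointwise to $\frac{1+itx}{it-x}$, while $\frac{k}{k+1}a\to a$ and the value $\frac{k}{k+2}(it)^{-1}$ at $x=0$ tends to $(it)^{-1}$, matching (6). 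Here the dominating bound must come from the integral representation $\textbf{g}_+(ix/t)=k\int_0^1 s^k(1+ixs/t)^{-1}ds$ rather than the power series, since $z=x/(it)$ may have modulus $>1$; this is the same linear-growth estimate in disguise, and dominated convergence finishes the interchange.

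Finally, for the displayed equality $\overline{\cup_{k=1}^\infty(\mathbb{U}_k,\boxplus)}=(ID,\boxplus)$: the inclusion $\subseteq$ holds because each $(\mathbb{U}_k,\boxplus)\subseteq(ID,\boxplus)$ and $(ID,\boxplus)$ is closed in the pointwise topology of transforms by the characterization (6); the reverse inclusion $\supseteq$ follows from the limit just established, since the background driving measure $\mu$ — equivalently the pair $(a,m)$ — is arbitrary, so every transform of the form (6) is a pointwise limit of elements of $\cup_{k}(\mathbb{U}_k,\boxplus)$. This completes the proof of Corollary 2.
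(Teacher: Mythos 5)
Your proof of the limit statement is correct, but it takes a genuinely different route from the paper's. The paper works entirely inside the explicit representation (16): it expands $k\Phi(\frac{x}{it},1,k)=\sum_{n\ge 0}(\frac{x}{it})^n\frac{k}{k+n}$, passes to the limit termwise to get $\frac{it}{it-x}$, and observes that the integrand of (16) then converges pointwise to the kernel $\frac{1+itx}{it-x}$ of (6) while $\frac{k}{k+1}a\to a$ --- in other words, exactly your ``cross-check'' paragraph, except that the paper gives no justification for interchanging the limit with the integral against $m(dx)$, and its series manipulation is literally valid only where $|x|<t$ (so that $|x/(it)|<1$). Your main argument instead starts from relation (9) of Theorem 1, rewrites $V_{\tilde{\nu_k}}(it)=k\int_0^1 s^k V_{\tilde{\mu}}(it/s)\,ds$ as an average of dilates of the BDPD transform against the approximate identity $(k+1)s^k ds$ concentrating at $s=1$, and supplies the analytic control the paper omits: the linear-growth bound $|V_{\tilde{\mu}}(iw)|\le |a|+w\,m(\Rset)$ for $w\ge 1$, which tames the endpoint $s=0$. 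This buys a fully rigorous passage to the limit (your domination via $\textbf{g}_+(ix/t)=k\int_0^1 s^k(1+ixs/t)^{-1}ds$ is exactly what is needed to repair the paper's pointwise computation as well), avoids the Hurwitz--Lerch function altogether, and makes transparent why the limit is $V_{\tilde{\mu}}(it)$, i.e.\ the transform of the common background driving measure --- which is also the correct implicit reading of the corollary's hypothesis that the $\tilde{\nu_k}$ share one pair $(a,m)$. One caveat, shared with the paper, concerns the final ``in other words'' sentence: your claim that $(ID,\boxplus)$ is closed under pointwise convergence of transforms on the imaginary axis is not automatic, since mass of $m$ can escape to infinity (e.g.\ $m_n=\delta_n$, $a_n=0$ gives $V_n(it)=\frac{1+itn}{it-n}\to -it$, which is not of the form (6)); the closure must therefore be read, as the paper's introduction parenthetically indicates, in the topology of weak convergence of the underlying measures, and your $\supseteq$ direction --- every pair $(a,m)$ in (6) arises as such a limit --- is the substantive half, which the paper itself asserts without proof.
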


\noindent To this end,  note that as $k\to \infty$ then

$k\Phi(\frac{x}{it},1,k)= k \sum_{n=0}^\infty (\frac{x}{it})^n \frac{1}{k+n}= \sum_{n=0}^\infty (\frac{x}{it})^n \frac{1}{1+n/k}\to \sum_{n=0}^\infty (\frac{x}{it})^n=\frac{it}{it-x}$,

\noindent and

$  [k it  \Phi(\frac{x}{it},1,k)- it  - \frac{k}{k+1}\frac{x}{1+x^2}]\frac{1+x^2}{x^2}\to [it \frac{it}{it-x}- it -\frac{x}{1+x^2}]\frac{1+x^2}{x^2}=\frac{1+itx}{it-x}$,

\noindent which proves Corollary 2.
\begin{rem}
\emph{For any $\beta\ge -2$, classes $(\mathbb{U}_\beta,\boxplus)$ are well defined by (14); cf. Jurek (1988) and (1989). Here we have restricted indices to the natural numbers to have the sequence of
the inclusions as was announced  in (1). Proposition 2 holds true when one replaces $k\ge 1$ by $\beta>0.$  Furthermore, for $\beta =0$ we get the selfdecomposable distributions as they are discussed below.
}
\end{rem}

\medskip
\textbf{4. Urbanik type classes $(L_k, \boxplus)$ of free-infinitely divisible transforms for $ k= 0, 1,2,...$. }

\medskip
Urbanik (1972 and 1973)  introduced  a filtration of convolution semigroups  of \emph{ selfdecomposable measures} (L\'evy class $L_0$) in a such way that
\begin{multline}
\emph{(Gaussian)}\subset \emph{(stable)}\subset L_\infty \subset ... \subset L_{k+1}\subset L_k \subset ...\subset L_0 \subset ... \subset ID;
\end{multline}
Then using the extreme points method  he  found their descriptions  in terms of  characteristic functions. Measures $\mu  \in (L_k,\ast)$ are called \emph{k-times selfdecomposable}; for a link to Urbanik (1973) see :

$www.math.uni.wroc.pl/^\sim zjjurek/urb-limitLawsOhio1973.pdf$

Later on, all above classes were described in  terms of  random integrals. Namely,
taking
$$r_k(t):= t^{k+1}/(k+1)!, \ t \in (0,\infty), \ \ \mbox{(a time change) and} \  h(t):=e^{-t},$$
we have the following  representations:
\begin{equation*}
L_k=I^{e^{-t},\, r_k(t)}_{(0,\infty)}(ID_{\log^{k+1}}), \ \ ID_{\log^{k+1}}:= \{\nu \in ID : \int_{\Rset}\log^{k+1}(1+|x|)\nu(dx)<\infty\}.
\end{equation*}
Furthermore, from the integral representations one  easily gets their characteristic functions in the same form as in Urbanik (1972 and 1973); see Jurek (1983b), Corollary 2.11 and Theorem 3.1.

\medskip
Here are  the free-infinitely divisible analogues of Urbanik classes $(L_k, \ast)$:
\begin{prop}
For $ k= 0,1,...$, a  measure $\tilde{\nu}$ is a free-probability counterpart of $\nu=[a,\sigma^2, M] \in (L_k,\ast)$, that is $\tilde{\nu}\in (L_k,\boxplus)$,  if and only if
\begin{equation}
V_{\tilde{\nu}}(it)= a+\frac{1}{2^{k+1}}\frac{\sigma^2}{it}+ \int_{\Rset\setminus{(0)}}\big( it Li_{k+1}(\frac{x}{it})-\frac{x}{1+x^2}\big)M(dx), \ \ t>0,
\end{equation}
where a L\'evy measure $M$ has log-moment $\int_{(|x|>1)}\log^{k+1}(1+|x|)M(dx)<\infty$.

\noindent Equivalently,
\begin{equation}
V_{\tilde{\nu}}(it)= a+ \int_{\Rset}\big[ it Li_{k+1}(\frac{x}{it})-\frac{x}{1+x^2}\big]\frac{m(dx)}{\log^{k+1}(1+|x|^{2/(k+1)})}, \ \ t>0,
\end{equation}
where $m$ is a finite Borel measure such that $m(\{0\})= \sigma^2$.
The  integrand in (19) at zero is equal to $\frac{1}{2^{k+1}}\frac{1}{it}$.

\emph{[Here $Li_{s}(z):= \sum_{n=1}^\infty\frac{z^n}{n^s}, \ |z|<1,$ (and analytically continued on $\Cset$) is \emph{the polylogarithmic function}.]}
\end{prop}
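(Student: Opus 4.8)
The plan is to apply Theorem 1 directly with the specific choices of $h$ and $r$ dictated by the random integral representation of the classical class $(L_k,\ast)$, and then to identify the resulting kernel $\textbf{g}_+$ with the polylogarithm $Li_{k+1}$. From the preamble we have $L_k = I^{e^{-t},\, r_k(t)}_{(0,\infty)}(ID_{\log^{k+1}})$ with $h(t)=e^{-t}$ and $r_k(t)=t^{k+1}/(k+1)!$, so the proof begins by computing the three ingredients of Theorem 1 on the interval $(0,\infty)$: the constant $\textbf{c}=\int_0^\infty e^{-t}\,dr_k(t)$, the constant $\textbf{d}=\int_0^\infty e^{-2t}\,dr_k(t)$, and the kernel $\textbf{g}_+(z)=\int_0^\infty \frac{e^{-t}}{z\,e^{-t}+1}\,dr_k(t)$. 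Since $dr_k(t)=\frac{t^k}{k!}\,dt$, the first integral is $\int_0^\infty e^{-t}\frac{t^k}{k!}dt = 1$, giving $\textbf{c}=1$, which matches the shift coefficient $a$ appearing in (19). The second integral is $\int_0^\infty e^{-2t}\frac{t^k}{k!}dt = 2^{-(k+1)}$, giving $\textbf{d}=2^{-(k+1)}$, consistent with the Gaussian coefficient $\frac{1}{2^{k+1}}\frac{\sigma^2}{it}$ in (19) and with the stated value $\frac{1}{2^{k+1}}\frac{1}{it}$ of the integrand at zero (recall the integrand at zero equals $\textbf{d}(it)^{-1}$ by (8)).

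The key step is the evaluation of the kernel. Writing the geometric-series expansion under the integral,
\begin{equation*}
\textbf{g}_+(z)=\int_0^\infty \frac{e^{-t}}{1+z\,e^{-t}}\,\frac{t^k}{k!}\,dt=\sum_{n=0}^\infty (-z)^n\int_0^\infty e^{-(n+1)t}\frac{t^k}{k!}\,dt=\sum_{n=0}^\infty \frac{(-z)^n}{(n+1)^{k+1}},
\end{equation*}
where the last equality uses $\int_0^\infty e^{-(n+1)t}\frac{t^k}{k!}dt=(n+1)^{-(k+1)}$. Reindexing by $j=n+1$ gives
\begin{equation*}
\textbf{g}_+(z)=\sum_{j=1}^\infty \frac{(-z)^{j-1}}{j^{k+1}}=(-z)^{-1}\sum_{j=1}^\infty \frac{(-z)^{j}}{j^{k+1}}=(-z)^{-1}\,Li_{k+1}(-z),
\end{equation*}
by the defining series of the polylogarithm. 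Substituting $z=ix/t$ then yields $\textbf{g}_+(ix/t)=(-ix/t)^{-1}Li_{k+1}(-ix/t)=\frac{it}{x}\,Li_{k+1}(x/(it))$, using $-ix/t=x/(it)$, so that the Poisson-part integrand $x\,\textbf{g}_+(ix/t)$ of (7) becomes exactly $it\,Li_{k+1}(x/(it))$, which is the leading term of the integrand in (19).

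It then remains to assemble these pieces into formula (7) of Theorem 1 with all signs taken as the upper (non-decreasing $r$) choice, since $r_k$ is increasing. With $\textbf{c}=1$, the correction term $\frac{(\pm)\textbf{c}}{1+x^2}$ in (7) contributes precisely $-\frac{x}{1+x^2}$ inside the integral, reproducing (19). The equivalent form (20) follows by the same substitution $m(dx)=\frac{x^2}{1+x^2}M(dx)$ used throughout Section 1, with the stated log-factor arising from rewriting the $\log$-moment condition on $M$ as an integrability condition against $m$; the value of the integrand at zero is read off from (8) as $\textbf{d}(it)^{-1}=\frac{1}{2^{k+1}}\frac{1}{it}$. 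The main obstacle I anticipate is purely bookkeeping rather than conceptual: justifying the interchange of summation and integration in the geometric expansion (valid for $|z|<1$, then extended by analytic continuation of both sides, exactly as the paper treats $Li_{k+1}$ as analytically continued to $\Cset$), and carefully matching the measure-transcription in the equivalent form (20), in particular verifying that the log-moment hypothesis $\int_{|x|>1}\log^{k+1}(1+|x|)M(dx)<\infty$ is precisely what guarantees $\tilde\nu\in(L_k,\boxplus)$ and corresponds to the domain $ID_{\log^{k+1}}$ of the random integral.
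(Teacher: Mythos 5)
Your computation of the first formula follows exactly the paper's route: apply Theorem 1 with $h(s)=e^{-s}$, $r_k(s)=s^{k+1}/(k+1)!$ on $(0,\infty)$, get $\textbf{c}=1$, $\textbf{d}=2^{-k-1}$, identify $\textbf{g}(z)=-z^{-1}Li_{k+1}(-z)$, hence $x\,\textbf{g}(ix/t)=it\,Li_{k+1}(x/(it))$, and insert into (7). Your geometric-series evaluation of $\textbf{g}$ is in fact more self-contained than the paper's, which simply quotes the integral ``by Wolframalpha.com''.

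However, your handling of the equivalent (second) formula contains a genuine error. You assert it ``follows by the same substitution $m(dx)=\frac{x^2}{1+x^2}M(dx)$ used throughout Section 1,'' but that substitution would yield a kernel weighted by $\frac{1+x^2}{x^2}$ (as in (8) and in Propositions 1 and 2), not the denominator $\log^{k+1}(1+|x|^{2/(k+1)})$ that actually appears; your sentence is internally inconsistent with the displayed formula. The substitution the paper makes is $m(dx):=\log^{k+1}(1+|x|^{2/(k+1)})\,M(dx)$ on $\Rset\setminus\{0\}$, plus the atom $m(\{0\}):=\sigma^2$, and the finiteness of this $m$ is taken from Jurek and Mason (1993), Proposition 1.8.13. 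The particular log-weight is not cosmetic: near $x=0$ one has $\log^{k+1}(1+|x|^{2/(k+1)})\sim x^2$, so finiteness of $m$ near the origin is equivalent to the L\'evy property of $M$, while for $|x|\to\infty$ the weight is comparable to $\log^{k+1}(1+|x|)$, so finiteness of $m$ at infinity is equivalent to the hypothesis $\int_{|x|>1}\log^{k+1}(1+|x|)M(dx)<\infty$. This exact correspondence between finite measures $m$ and pairs $(\sigma^2,M)$ with the $\log^{k+1}$-moment is what makes the second formula an ``if and only if'' characterization of $(L_k,\boxplus)$; with your substitution, $m$ would be finite for \emph{every} L\'evy measure $M$, the log-moment condition would not be encoded, and the converse direction would fail. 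Relatedly, the value $\frac{1}{2^{k+1}}\frac{1}{it}$ of the integrand at zero cannot simply be ``read off from (8)'' since the measure normalization here is different; it holds because the numerator behaves like $\frac{x^2}{2^{k+1}it}$ while the log-weight behaves like $x^2$ as $x\to 0$.
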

\begin{proof}
For $h(s)=e^{-s}, r_k(s)=s^{k+1}/(k+1)!$ and $(a,b]=(0,\infty)$, using Theorem 1, we get $\textbf{c}=1, \textbf{d}=2^{-k-1}$ and
$$
\textbf{g}(z)=\int_0^\infty\frac{e^{-s}}{1+ze^{-s}}\frac{s^k}{k!}ds = -z^{-1}Li_{k+1}(-z), \ \mbox{by Wolframalpha.com}
$$
Hence $\textbf{g}(ix/t)= -t/(ix)Li_{k+1}(-ix/t)=it/x L_{k+1}(x/it)$. Inserting this, together with $\textbf{c}=1$ and  $\textbf{d}=2^{-k-1}$, into (7) in  Theorem 1 we get
\begin{equation}
V_{\tilde{\rho}}(it)= a +\frac{\sigma^2}{it} 2^{-k-1}+\int_{\Rset\setminus{(0)}}[itLi_{k+1}(x/it)-\frac{x}{1+x^2}]M(dx), \ \ t>0.
\end{equation}
Since $m(dx):=\log^{k+1}(1+|x|^{2/(k+1)})M(dx)$ is a finite measure on $\Rset\setminus\{0\}$, (see Jurek and Mason (1993), Proposition 1.8.13.)  and adding an atom $G(\{0\}):=\sigma^2$, we complete the proof of Proposition 3.
\end{proof}
\begin{rem}
\emph{Since $Li_1(\frac{x}{it})\equiv PolyLog[1,\frac{x}{it}]= -\log(1-\frac{x}{it})$ then taking $k=0$ in Proposition 3  above, we retrieve Proposition 2 from Jurek (2016).}
\end{rem}

\medskip
Here is a relation between the consecutive classes $(L_k, \boxplus)$.
\begin{cor}
Let define the differential operator $Df(t):=f(t)-t \frac{d}{dt}f(t)$. Then for $k\ge 0$,
$$D: (L_k,\boxplus)\to ( L_{k-1},\boxplus), \ \ \mbox{ where} \ \  (L_{-1},\boxplus)\equiv (ID,\boxplus).$$
Hence $D^{k+1}: (L_k,\boxplus)\to (ID,\boxplus).$
\end{cor}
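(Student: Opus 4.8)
The plan is to mimic the computation already carried out for Corollary 1, but now driven by the polylogarithm rather than by the Hurwitz-Lerch function. I would start from the representation (19) of a transform $V_{\tilde{\nu}}(it)\in (L_k,\boxplus)$ furnished by Proposition 3 and apply $D$ term by term, exploiting that $D$ is linear and acts only in the variable $t$. The drift $a$ is killed by the derivative part and hence fixed by $D$, while the compensating term $\frac{x}{1+x^2}$ does not depend on $t$ and is therefore also left invariant; only the Gaussian coefficient and the polylogarithmic kernel genuinely move.

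For the Gaussian part I would record the elementary identity $D(c\,t^{-1})=c\,t^{-1}-t(-c\,t^{-2})=2c\,t^{-1}$, so that $D\big(\frac{1}{2^{k+1}}\frac{\sigma^2}{it}\big)=\frac{1}{2^{k}}\frac{\sigma^2}{it}$, which is exactly the Gaussian coefficient prescribed by (19) for the class $(L_{k-1},\boxplus)$. The heart of the argument is the action of $D$ on the kernel $\psi_k(t):=it\,Li_{k+1}(x/it)$ for fixed $x$. Writing $z:=x/(it)$, so that $\frac{dz}{dt}=-z/t$, and using the defining relation $\frac{d}{dz}Li_{k+1}(z)=z^{-1}Li_k(z)$ of the polylogarithm, the chain rule gives $\psi_k'(t)=i\,Li_{k+1}(z)-i\,Li_k(z)$, whence $t\psi_k'(t)=\psi_k(t)-\psi_{k-1}(t)$ and therefore $D\psi_k=\psi_k-t\psi_k'=\psi_{k-1}$. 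In other words $D\big(it\,Li_{k+1}(x/it)\big)=it\,Li_k(x/it)$, and combined with the invariance of $\frac{x}{1+x^2}$ this turns the L\'evy integrand of (19) for $(L_k,\boxplus)$ precisely into the one for $(L_{k-1},\boxplus)$.

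It then remains to check that the output is a bona fide element of the target class, and here the only bookkeeping point is the moment condition: $D$ leaves the L\'evy measure $M$ untouched, and since $M$ carries a finite $\log^{k+1}$-moment it a fortiori carries a finite $\log^{k}$-moment, which is exactly what Proposition 3 requires of a measure in $(L_{k-1},\boxplus)$. For the base case $k=0$ I would separately verify $D:(L_0,\boxplus)\to (ID,\boxplus)$ by inserting $Li_0(z)=z/(1-z)$: a short computation gives $it\,Li_0(x/it)-\frac{x}{1+x^2}=\frac{x^2(1+itx)}{(1+x^2)(it-x)}$, which is the L\'evy integrand of the Bercovici-Voiculescu formula (6) once $M$ is rewritten through $m(dx)=\frac{x^2}{1+x^2}M(dx)$. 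Finally, iterating the single-step map $k+1$ times yields $D^{k+1}:(L_k,\boxplus)\to (ID,\boxplus)$. I do not anticipate a genuine obstacle here: the whole argument collapses to the clean recursion $D\psi_k=\psi_{k-1}$, and the only thing one must not overlook is that the drop in the polylogarithmic index is matched both by the halving of the Gaussian coefficient and by the (automatically satisfied) relaxation of the log-moment requirement.
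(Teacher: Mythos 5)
Your proof is correct and follows essentially the same route as the paper's: apply $D$ termwise to the integral representation from Proposition 3, use $\frac{d}{dz}Li_{k+1}(z)=z^{-1}Li_{k}(z)$ to obtain $D\big(it\,Li_{k+1}(x/(it))\big)=it\,Li_{k}(x/(it))$, and double the Gaussian coefficient via $D(ct^{-1})=2ct^{-1}$, so that $\frac{1}{2^{k+1}}\frac{\sigma^2}{it}$ becomes $\frac{1}{2^{k}}\frac{\sigma^2}{it}$. Your two additions --- the check that the finite $\log^{k+1}$-moment of $M$ a fortiori gives the $\log^{k}$-moment needed for the target class, and the explicit $Li_{0}(z)=z/(1-z)$ computation showing the $k=0$ image matches the Bercovici--Voiculescu form (6) --- are sound refinements of points the paper leaves implicit.
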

\begin{proof} Let  $V_{\tilde{\nu}}(it)= a+\frac{1}{2^{k+1}}\frac{\sigma^2}{it}\in (L_k,\boxplus)$. Then
\[
 D(V_{\tilde{\nu}}(it))=a+\frac{1}{2^{k+1}}\frac{\sigma^2}{it} - t\frac{\sigma^2}{2^{k+1}}(-1)i(it)^{-2}= a+\frac{1}{2^k} \frac{\sigma^2}{it}\in (L_{k-1},\boxplus).
\]
Keeping in mind that $d/dz Li_{k+1}(z)= z^{-1}Li_k(z)$ for the Poissonian part of (18) we have
\begin{multline*}
D[ \int_{\Rset\setminus{(0)}} \big( it Li_{k+1}(\frac{x}{it})-\frac{x}{1+x^2}\big)M(dx)]\\=  \int_{\Rset\setminus{(0)}}[\big( it Li_{k+1}(\frac{x}{it})-\frac{x}{1+x^2}\big)- t \frac{d}{dt} ( it Li_{k+1}(\frac{x}{it}))]M(dx)\\ =  \int_{\Rset\setminus{(0)}}[ - \frac{x}{1+x^2}- t (it \frac{d}{dt}(Li_{k+1}(\frac{x}{it}))]M(dx) \\ =  \int_{\Rset\setminus{(0)}}[ - \frac{x}{1+x^2} - i t^2(\frac{it}{x}Li_k(\frac{x}{it})(- ix) (it)^{-2}]M(dx) \\ =   \int_{\Rset\setminus{(0)}}[ - \frac{x}{1+x^2} + it Li_k(\frac{x}{it})]M(dx) \in (L_{k-1}
, \boxplus),\ \qquad \qquad
\end{multline*}
which completes the proof of Corollary 3.
\end{proof}

\medskip
\textbf{5. Relations between  classes $(L_k,\boxplus)$ and $ (\mathcal{U}^{<k>}, \boxplus)$.}

\medskip
Since $(L_k,\ast)\subset (\mathcal{U}^{<k>},\ast), \mbox{for}\  k=0,1, ...$ (see Jurek (2004), Corollaries 2 and 7),
therefore the injection (4) between the classical and the free infinite divisibility implies that
\begin{equation}
(L_k,\boxplus) \subset (\mathcal{U}^{<k>},\boxplus), \ \  k=0,1,2, ....
\end{equation}

\medskip
Although the classes $(L_k, \ast)$ and $(\mathcal{U}^{<k>},\ast)$ were introduced via the  linear and the non-linear scaling, respectively,  here  is another relation, besides (21), between their free-probability counterparts.

\medskip
 For the notational simplicity, as is in Jurek (1985), let us introduce
\begin{equation*}
\mathcal{I}(\nu) \equiv I^{e^{-t}, t}_{(0,\infty)}(\nu), \quad  \nu\in I D_{\log}; \qquad \mbox{and} \  \ \mathcal{J}(\rho)\equiv I^{t,t}_{(0,1]}(\rho), \ \  \rho \in ID.
\end{equation*}
Then we have
$$(L_0,\ast)=\mathcal{I}(ID_{log}), \ L_{k+1}=\mathcal{I}(I^{e^{-t}, r_k(t)}_{(0,\infty)}(ID_{\log^{k+2}}), \  r_k(t)= \frac{1}{(k+1)!}t^{k+1};$$
$$\mathcal{U}^{<0>}= \mathcal{J}(ID), \ \quad \mathcal{U}^{<k+1>}=\mathcal{J}(I^{t, \tau_k(t)}_{(0,1]}(ID)), \tau_k(t)= \int_0^t (-log x)^{k-1}dx; $$
that is, those classes correspond to the compositions of  $k+1$ mappings $\mathcal{I}$ and $\mathcal{J}$, respectively; see Jurek (2018) for the general theory of compositions of the random integral mappings.

\begin{thm}
For  $ k=0,1,...$, a  measure $\tilde{\rho}\in (\mathcal{U}^{<k>}, \boxplus)$ is in $(L_k, \boxplus)$ if and only if there exists
$\omega \in (ID_{\log^{k+1}},\ast)$ such that $\tilde{\rho}=\widetilde{\mathcal{I}(\omega)}\boxplus \tilde{\omega}$.
\end{thm}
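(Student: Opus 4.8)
The plan is to transport the whole statement to the classical side through the isomorphism (4) and then read it off from the composition calculus of the two random-integral maps. Since (4) is linear in $\log\phi_\mu$, it intertwines $\ast$ with $\boxplus$ and $T_c$ with the free scaling; hence $\tilde{\rho}=\widetilde{\mathcal{I}(\omega)}\boxplus\tilde{\omega}$ holds exactly when $\rho=\mathcal{I}(\omega)\ast\omega$, while $\tilde{\rho}\in(L_k,\boxplus)$ and $\tilde{\rho}\in(\mathcal{U}^{<k>},\boxplus)$ are by definition equivalent to $\rho\in(L_k,\ast)$ and $\rho\in(\mathcal{U}^{<k>},\ast)$, respectively. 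So it suffices to prove the classical assertion: for $\rho\in\mathcal{U}^{<k>}$ one has $\rho\in L_k$ if and only if $\rho=\mathcal{I}(\omega)\ast\omega$ for some $\omega\in ID_{\log^{k+1}}$.

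The engine is the observation that, on $\log$-characteristic functions (equivalently, after (4), on the transforms $V$), the two maps are resolvents of one and the same operator $D:=v\,\frac{d}{dv}$: formula (10) gives $\mathcal{I}=D^{-1}$ and $\mathcal{J}=(1+D)^{-1}$, so they commute and satisfy $\mathcal{J}^{-1}=1+\mathcal{I}^{-1}$, i.e. $1+\mathcal{I}=\mathcal{I}\,\mathcal{J}^{-1}$ (with $1$ the identity). Applying this identity to $\log\phi_{\omega}$ and using $\mathcal{J}=(1+D)^{-1}$ gives $\mathcal{J}\log\phi_{\rho}=D^{-1}\log\phi_{\omega}=\log\phi_{\mathcal{I}(\omega)}$; thus $\rho=\mathcal{I}(\omega)\ast\omega$ is equivalent to $\mathcal{J}(\rho)=\mathcal{I}(\omega)$, so that $\mathcal{I}(\omega)$ is forced to be the one-step $\mathcal{U}$-background measure of $\rho$, and $\omega$ is uniquely determined by $\rho$.

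With this in hand I would argue both directions through the factorizations $L_k=\mathcal{I}^{\,k+1}(ID_{\log^{k+1}})$ and $\mathcal{U}^{<k>}=\mathcal{J}^{\,k}(ID)$ from Section 5. For necessity, given $\rho\in L_k$ write $\rho=\mathcal{I}(\eta)$ with $\eta=\mathcal{I}^{\,k}(\nu)$, $\nu\in ID_{\log^{k+1}}$, and set $\omega:=\mathcal{J}(\eta)$; commutativity yields $\mathcal{J}(\rho)=\mathcal{J}\mathcal{I}(\eta)=\mathcal{I}\mathcal{J}(\eta)=\mathcal{I}(\omega)$, so by the previous paragraph $\rho=\mathcal{I}(\omega)\ast\omega$. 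For sufficiency, from $\mathcal{I}(\omega)=\mathcal{J}(\rho)$ together with $\rho\in\mathcal{U}^{<k>}=\mathcal{J}^{\,k}(ID)$ one applies the inverse operators $D$ and $1+D$ the appropriate number of times to peel off the $k+1$ copies of $\mathcal{I}$ and place the innermost background measure inside $ID_{\log^{k+1}}$, i.e. $\rho\in L_k$.

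The step that needs real care — and which I expect to be the main obstacle — is not the algebra but the bookkeeping of domains and logarithmic moments: one must check that $\omega=\mathcal{J}\bigl(\mathcal{I}^{\,k}(\nu)\bigr)$ indeed lands in $ID_{\log^{k+1}}$, and conversely that $\omega\in ID_{\log^{k+1}}$ together with $\rho\in\mathcal{U}^{<k>}$ is exactly what keeps each application of $D$ and $1+D$ inside the cone $\log(ID)$. Here I would invoke the behaviour of $\mathcal{I}$ and $\mathcal{J}$ on the classes $ID_{\log^{m}}$ (Jurek and Mason (1993), Proposition 1.8.13, as already used for Proposition 3) and the composition rules for random integrals (Jurek (2018)); the underlying classical equivalence is the one recorded in Jurek (1985). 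Once the moment matching is settled, transporting back through (4) returns the free statement verbatim.
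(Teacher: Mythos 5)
Your route is essentially the paper's: transport everything to the classical side via (4); use as the pivot the equivalence $\rho=\mathcal{I}(\omega)\ast\omega\Longleftrightarrow\mathcal{J}(\rho)=\mathcal{I}(\omega)$, which is exactly Theorem 4.5 of Jurek (1985) that the paper cites and which you rederive (correctly) from the resolvent identities $\mathcal{I}=D^{-1}$, $\mathcal{J}=(1+D)^{-1}$ with $D=v\,\frac{d}{dv}$; and for necessity take $\omega:=\mathcal{J}(\eta)$ where $\rho=\mathcal{I}(\eta)$, which is the paper's $\omega:=\mathcal{J}(\mu_1)$ (the paper phrases this as an induction, but the inductive hypothesis is never used, so your direct version is the same argument). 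One terminological slip: $\mathcal{J}(\rho)=\mathcal{I}(\omega)$ makes $\rho$ the background driving measure \emph{of} $\mathcal{I}(\omega)$, not $\mathcal{I}(\omega)$ a background measure of $\rho$.

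The genuine gap is your sufficiency direction, and it is not mere bookkeeping. From the factorization your engine gives $\mathcal{J}(\rho)=\mathcal{I}(\omega)$, hence $\mathcal{J}^{k+1}(\nu)=\mathcal{I}(\omega)$ when $\rho=\mathcal{J}^{k}(\nu)$. To "peel off" and land at $\rho\in L_k$ you must apply $\mathcal{J}^{-1}=1+D$ on the $\omega$ side, i.e.\ you need $(1+D)\log\phi_\omega$ to be a L\'evy exponent, which is precisely the extra hypothesis $\omega\in\mathcal{J}(ID)=\mathcal{U}$; nothing in $\omega\in ID_{\log^{k+1}}$ provides it. In fact the step fails already for $k=0$: if $M_\omega$ has density $m$ on $(0,\infty)$, then $\rho=\omega\ast\mathcal{I}(\omega)$ has L\'evy density $p(x)=m(x)+Q(x)/x$ with $Q(x)=\int_x^\infty m(u)\,du$, and $\frac{d}{dx}\bigl(x\,p(x)\bigr)=x\,m'(x)$, so $\rho\in L_0$ if and only if $m$ is non-increasing, i.e.\ if and only if $\omega\in\mathcal{U}$. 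Taking $\omega$ compound Poisson with $m(x)=\mathbf{1}_{(0,1]}(x)+\bigl(1+\varepsilon(x-1)\bigr)\mathbf{1}_{(1,2]}(x)$ and small $\varepsilon>0$, one checks $p$ is non-increasing, so $\rho\in\mathcal{U}$ (the theorem's hypothesis under either reading of $\mathcal{U}^{<0>}$), $\omega$ has all logarithmic moments, yet $\rho\notin L_0$. So the "if" implication cannot be obtained from the stated hypotheses; the honest converse needs $\omega$ of the special form $\mathcal{J}(\cdot)$, which is what the paper's construction produces but its statement does not demand. (Read closely, the paper's own proof establishes only the necessity direction, so you are not behind the paper there; but your text claims both directions and the plan for the second one would fail.)

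The moment bookkeeping you flag as the "main obstacle" also cannot be settled the way you propose, because the cited facts go the other way: $\mathcal{I}$ \emph{consumes} one order of logarithmic moment, $\mathcal{I}(\sigma)\in ID_{\log^{m}}$ iff $\sigma\in ID_{\log^{m+1}}$ (since $\int_0^\infty \log^m\bigl(1+e^{-t}|x|\bigr)\,dt$ grows like $\log^{m+1}|x|$), while $\mathcal{J}$ merely preserves such moments. Hence $\eta=\mathcal{I}^{\,k}(\nu)$ with $\nu\in ID_{\log^{k+1}}$ lies only in $ID_{\log}$, and so does $\omega=\mathcal{J}(\eta)$; for $k\ge1$ the required membership $\omega\in ID_{\log^{k+1}}$ is generically false, and Jurek--Mason Proposition 1.8.13 confirms rather than repairs this. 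This defect is shared with the paper's proof (which asserts $\omega=\mathcal{J}(\mu_1)\in ID_{\log^{k+1}}$ without justification), but it means your plan, like the paper's, yields the statement as written only for $k=0$.
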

\begin{proof}
 Let $k=0$ and $\tilde{\rho}$ be the free-counterpart of  $\rho\in (\mathcal{U}^{<0>}, \ast) \cap (L_0, \ast)$. Therefore  there exist  $\nu\in ID$ and $\mu \in ID_{\log}$ such that  $\rho=\mathcal{J}(\nu)=\mathcal{I}(\mu)$. However,  to have  such equality it is necessary  and sufficient  that
$\nu = \mu \ast \mathcal{I}(\mu) $; see  Theorem 4.5 in Jurek (1985). Equivalently $\rho= \mathcal{J}(\nu)=  \mathcal{J}(\mu) \ast \mathcal{I}\big(\mathcal{J}(\mu)\big) $.  Taking $\omega:= \mathcal{J}(\mu)$ we have  that $\omega \in ID_{\log} $ as  $\mu\in ID_{\log} $ and finally $\rho=\omega\ast\mathcal{I}(\omega)$. Hence
$
\tilde{\rho}=\widetilde{\big(\mathcal{I}(\omega) \ast \omega \big)} = \widetilde{\mathcal{I}(\omega)}\boxplus \tilde{\omega},
$
which proves the Theorem 2 for $k=0$.

\medskip
Assume that the theorem is true for the classes with  indices $0\le j\le k$ and let  $\tilde{\rho}\in \mathcal{U}^{<k+1>},\boxplus )\cap (L_{k+1},\boxplus)$ be the counterpart of $ \rho\in \mathcal{U}^{<k+1>},\ast )\cap (L_{k+1} ,\ast)$. Then there exist $\nu \in ID$ and $\mu \in ID_{\log^{k+2}}$ such that
$$
\rho=I^{t,\tau_{k+1}(t)}_{(0,1)}(\nu)=\mathcal{J}(I^{t,\tau_{k}(t)}_{(0,1)}(\nu )) \ \mbox{and} \ \ \rho = I^{e^{-t},r_{k+1}(t)}_{(0,\infty)}(\mu)=\mathcal{I}(I^{e^{-t}, r_k(t)}_{(0,\infty)}(\mu)),
$$
and by putting $\nu_1:= I^{t,\tau_{k}(t)}_{(0,1)}(\nu)\in\mathcal{U}^{<k>}$ and $\mu_1:=  \mathcal{I}(I^{e^{-t}, r_k(t)}_{(0,\infty)}(\mu)\in L_k $, from the above line,
we have $\rho=\mathcal{J}(\nu_1)=\mathcal{I}(\mu_1)$. From  this  (as in  the case $k=0$) we get $\nu_1= \mu_1 \ast \mathcal{I}(\mu_1)$ and $\rho=\mathcal{J}(\mu_1)\ast \mathcal{I}(\mathcal{J}(\mu_1))$. Taking $\omega:=\mathcal{J}(\mu_1)\in ID_{\log^{k+1}}$ we get $\tilde{\rho}=\tilde{\omega}\boxplus \widetilde{\mathcal{I}(\omega)}$, which completes a proof.
\end{proof}

Since there is no random integral representation  for the class $(L_\infty,\ast)$, so there is no direct application of the basic Theorem 1. Nevertheless we have
\begin{prop}
 (i) \ A measure $\tilde{\nu}$ is a free-probability counterpart if $\nu\in (L_{\infty}, \ast)$, that is $\tilde{\nu}\in (L_\infty,\boxplus)$, if and only if
\begin{equation}
V_{\tilde{\nu}}(it)= c - \int_{(-2,2]\setminus{\{ 0\}}}\frac{\Gamma(|x|+1) ie^{i \pi x/2} +x}{t^{|x|-1}\,(1-|x|)}\, G(dx),
\end{equation}
 where $c\in \Rset$, $G$ is a finite Borel measure and the integrand at \, $\pm 1$\, is equal to $i\pi/2 \mp \gamma$, respectively.

 (ii)\ A measure $\tilde{\nu}$ is a free-probability counterpart if $\nu\in (\mathcal{U}^{<\infty>}, \ast)$, that is, $\tilde{\nu}\in (\mathcal{U}^{<\infty>},\boxplus)$, if and only if  $V_{\tilde{\nu}}(it)$ is of the form (22) above.
\end{prop}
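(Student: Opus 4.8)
The plan is to bypass Theorem~1 --- which, as already noted, does not apply here for want of a random integral representation of $L_\infty$ --- and instead to exploit the classical structure of $L_\infty$ together with the linearity of the isomorphism (4). The first step is to recall the classical characterization: $L_\infty$ is the smallest class containing all stable laws that is closed under convolution and weak limits, so that $\nu=[a,\sigma^2,M]\in(L_\infty,\ast)$ if and only if the L\'evy measure $M$ is a mixture of stable L\'evy measures $x^{-1-\alpha}\,dx$ (taken separately on $(0,\infty)$ and on $(-\infty,0)$) over the index $\alpha\in(0,2)$, augmented by a Gaussian component (Urbanik (1973); see also Jurek and Mason (1993)). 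I would encode the mixing data as a single finite Borel measure $G$ on $(-2,2]$, where $|x|=\alpha$ is the stable index, $\operatorname{sgn}(x)$ selects the half-line and hence the phase, the endpoint $x=2$ carries the Gaussian part (only one sign, explaining why $-2$ is excluded), and $x=0$ is excluded as degenerate. The constant $c$ then absorbs the drift $a$ and the leftover centering.

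Next I would use that the map $\mu\mapsto V_{\tilde\mu}(it)=it^2\int_0^\infty\log\phi_\mu(-u)e^{-tu}\,du$ of (4) is additive in $\log\phi_\mu$. Since $\log\phi_\nu$ is the $G$-mixture of the log-characteristic functions of the elementary stable components, $V_{\tilde\nu}(it)$ is, after a Fubini interchange, the same $G$-mixture of their Voiculescu transforms; thus it suffices to compute $V$ for one $\alpha$-stable component and then integrate against $G$. For that elementary computation I would insert the stable log-characteristic function, which for $u>0$ has the form $\log\phi_\alpha(-u)=-\,\mathrm{const}\cdot u^{\alpha}e^{\pm i\pi\alpha/2}$ (the sign fixed by the half-line, i.e.\ by $\operatorname{sgn}(x)$), into (4) and use $\int_0^\infty u^{\alpha}e^{-tu}\,du=\Gamma(\alpha+1)t^{-\alpha-1}$. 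This produces the scaling $t^{-(\alpha-1)}=t^{1-|x|}$ and the factor $\Gamma(|x|+1)\,i\,e^{i\pi x/2}$ of (22); the compensating $+x$ in the numerator and the factor $(1-|x|)^{-1}$ in the denominator come from carrying the centering term $\tfrac{itx}{1+x^2}$ (respectively $\tfrac{1+itx}{it-x}$ on the free side, cf.\ (6)) through the same Laplace integral and from the normalization relating $x^{-1-\alpha}\,dx$ to the finite measure $G$. Summing the two half-lines and the Gaussian endpoint $x=2$ (which yields the $\sigma^2/(it)$-type contribution) and folding all constants into $c$ and $G$ gives (22).

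The main obstacle is the behaviour at the index $\alpha=1$ and the justification of the interchanges. At $x=\pm1$ both the numerator and the denominator $(1-|x|)$ of (22) vanish, so the integrand must be defined by its limit; evaluating this $0/0$ expression by differentiating in $\alpha$ brings in $\Gamma'(2)=\psi(2)\,\Gamma(2)=1-\gamma$ together with the phase derivative $i\pi/2$, producing the stated boundary value $i\pi/2\mp\gamma$ at $x=\pm1$ --- here the logarithmic singularity of the stable (Cauchy) transform at index one is exactly what generates the Euler--Mascheroni constant $\gamma$. I expect the bulk of the analytic bookkeeping to be in this limit and in justifying the Fubini interchange between the index integration and the Laplace variable $u$ in (4), uniformly in $t>0$, together with the branch choices in $e^{i\pi x/2}$.

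Finally, part (ii) reduces to (i) via the classical identity $(\mathcal{U}^{<\infty>},\ast)=(L_\infty,\ast)$. The inclusion $L_\infty\subset\mathcal{U}^{<\infty>}$ is immediate from $(L_k,\ast)\subset(\mathcal{U}^{<k>},\ast)$ (relation (21)) upon intersecting over $k$, and the reverse inclusion holds because $\mathcal{U}^{<\infty>}$ is likewise the stable-generated class. Applying the injection (4) to this equality of classical classes yields $(\mathcal{U}^{<\infty>},\boxplus)=(L_\infty,\boxplus)$, so the transform $V_{\tilde\nu}(it)$ is again of the form (22), which is exactly the assertion of (ii).
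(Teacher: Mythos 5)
Your proposal follows essentially the same route as the paper: both rest on Urbanik's classical characterization of $(L_\infty,\ast)$ (which you recover via the stable-mixture description with mixing measure $G$ on $(-2,2]$, the paper by quoting Urbanik's explicit characteristic function formula), then apply the identification (4) with a Fubini interchange, evaluate the Laplace integrals $\int_0^\infty u^{|x|}e^{-tu}\,du=\Gamma(|x|+1)t^{-|x|-1}$ to produce the kernel $\Gamma(|x|+1)\,i e^{i\pi x/2}\,t^{1-|x|}+x$, resolve the $x=\pm1$ singularity through $\Gamma'(2)=1-\gamma$ to obtain the boundary values $i\pi/2\mp\gamma$, and reduce part (ii) to the classical identity $(L_\infty,\ast)=(\mathcal{U}^{<\infty>},\ast)$ (Jurek (2004), Corollary 7). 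The argument is correct and matches the paper's proof in all essential steps.
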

\begin{proof}
From Urbanik (1972) Theorem 2 or Urbanik (1973), Theorem 2

(or $www.math.uni.wroc.pl/^\sim zjjurek/urb-limitLawsOhio1973.pdf$)

we know that
$\nu\in (L_{\infty},\ast) \ \ \ \mbox{iff}$
\begin{multline*}
\phi_{\nu}(t)= \exp\Big( i a t- \int_{(-2,2]\setminus{\{0\}}} \big[ |t|^{|x|}(\cos (\frac{\pi x}{2})- i \frac{t}{|t|} \sin(\frac{\pi x}{2}))+ itx\big]\frac{G(dx)}{1-|x|}\Big)
\end{multline*}
where $a \in \Rset$ and $G$ is a finite Borel  measure on $(-2,0)\cup (0,2]$.

Let  use the  identification  (4). Then
\begin{multline*}
V_{\tilde{\nu}}(it)=it^2\int_0^\infty\log\phi_{\nu}(-u)e^{-tu}du  = \\  \, it^2\int_0^\infty\Big(- i a u- \int_{(-2,2]\setminus{\{0\}}} \big[ |u|^{|x|}(\cos (\frac{\pi x}{2})+i \frac{u}{|u|} \sin(\frac{\pi x}{2}))-  iux\big]\frac{G(dx)}{1-|x|}\Big)e^{-tu}du\\ =
 a - \int_{(-2,2]\setminus{\{0\}}} it^2 \int_0^\infty\big[ u^{|x|}(\cos (\frac{\pi x}{2})+i \frac{u}{|u|} \sin(\frac{\pi x}{2}))-  iux\big]e^{- tu}du\, \frac{G(dx)}{1-|x|} \\ =
 a - \int_{(-2,2]\setminus{\{0\}}}[ it^2\, \Gamma(1+|x|)t^{-(1+|x|)}(\cos (\frac{\pi x}{2})+i \sin(\frac{\pi x}{2})) +x \big] \frac{G(dx)}{1-|x|}   \\ = a - \int_{(-2,2]\setminus{\{0\}}}[  \Gamma(1+|x|)t^{1-|x|}\, i e^{i \pi x/2} +x \big] \frac{G(dx)}{1-|x|}
\end{multline*}
where
$$
\lim_{x\to 1}\frac{\Gamma(|x|+1) ie^{i \pi x/2} +x}{1-|x|}=i \pi/2  -\gamma; \ \ \lim_{x\to -1}\frac{\Gamma(|x|+1) ie^{i \pi x/2} +x}{1-|x|}=i\pi/2+\gamma,
$$
as for
 $x>0$
\begin{multline*}
\lim_{x\to 1} \frac{d}{dx}
 \Gamma(x+1)= \lim_{x\to1} \frac{d}{dx}\int_0^\infty u^x e^{-u}du=
\lim_{x\to1}  \int_0^\infty \log(u)  u^{x}e^{-u}du\\=\int_0^\infty u \log(u)e^{-u}du=1-\gamma; \ \ \ (\mbox{Euler's constant});
\end{multline*}
which gives part (i)  of Proposition 4. Part (ii) follows from the identity $(L_\infty, \ast)= (\mathcal{U}^{<\infty>}, \ast)$; see Jurek (2004), Corollary 7.
\end{proof}

Because of the special role of $\pm 1$, in Proposition 4, let us  consider the following example:

\begin{ex}\emph{
Let take $G(dx):=1/2\delta_{-1}(dx)+1/2\delta_1(dx)$ (Rademacher distribution) in Proposition 4. Then
$$
V_{\tilde{\nu}}(it)= c - i\pi/2=c + 1/2\int_{\Rset}\frac{1+itx}{it-x}\, \frac{dx}{1+x^2}, \ t>0,
$$
which is the classical example of  Pick function; (Voiculescu representation of a free-infinitely divisible $\tilde{\nu}$); ( $\int_{\Rset}\frac{1+itx}{it-x}\frac{dx}{1+x^2}=- i \pi$).}
\end{ex}

\medskip
\textbf{References.}

\medskip
O. E. Barndorff-Nielsen and  S. Thorbjorsen (2006), Classical and free infinite divisibility and L\'evy processes, \emph{Lect. Notes in Math.} vol. 1866, pp. 33-159; Springer.

\medskip
H. Bercovici and D. V. Voiculescu (1993), Free convolution measures with unbounded support, \emph{Indiana Univ. Math. J.}, vol. 42, pp. 733-773.

\medskip
R. C. Bradely and Z. J. Jurek (2014), The strong mixing and the selfdecomposability, \emph{Stat. $\&$ Probab. Letters}, vol. 84, pp.67-71.

\medskip
W. Feller (1966), \emph{An introduction to probability theory and its applications}, vol. II, J. Wiley $\&$ Sons, New York.

\medskip
B. V. Gnedenko and A. N. Kolomogorov (1954),\emph{ Limit distributions  for sums of independent random variables},
Addison- Wesley.

\medskip
I. S. Gradshteyn and M.  Ryzhik (1994), \emph{Table of integrals, series, and products}, $5^{th}$  Edition, Academic Press, New York.

\medskip
L. Jankowski and Z. J. Jurek (2012), Remarks on restricted Nevanlinna transforms, \emph{Demonstratio Math.} \textbf{XIV}, no 2, pp. 297-307

\medskip
Z. J.  Jurek (1977), Limit distributions for sums of shrunken random variables. In : \emph{Second Vilnius Conf. Probab. Theor. Math. Statistics. Abstract of Communications 3}, pp. 95-96.

\medskip
Z. J. Jurek (1981), Limit distributions for sums of shrunken random variables, \emph{Dissertationes Math.} vol. 185, (46 pages), PWN  Warszawa.

\medskip
Z. J. Jurek (1983a), Limit distributions and one-parameter groups of linear operators on Banach spaces, \emph{J. Multivar. Anal.}, vol. 13, pp. 578-604.

[Also: Mathematisch Instituut, Katholieke Unversiteit, Nijmegen, The Netherlands. Report 8106, February 1981.]

\medskip
Z. J. Jurek (1983b), The classes $L_m(Q)$ of probability measures on Banach spaces,\emph{Bull. Polish Acad. Sci. Math.}, vol. 31, pp. 51-62.

\medskip
Z. J. Jurek (1985), Relations between the s-selfdecomposable and selfdecomposable measures, \emph{ Ann. Probab.} vol. 13, pp. 592-608.

[Also see a Conjucture on : www.math.uni.wroc.pl/$^\sim$zjjurek/Conjecture.pdf]

\medskip
Z. J. Jurek (1988), Random integral representations for classes of limit distributions similar to L\'evy class $L_0$, \emph{Probab. Th. Rel. Fields}, vol. 78, pp. 473-490.

[ Also: Center For Stochastic Process, University of North Carolina, Chapel Hill, North Carolina, Tech. Report 117, September 1985.]

\medskip
Z. J. Jurek (1989),  Random integral representations for classes of limit distributions similar to L\'evy class $L_0$. II.  \emph{Nagoya Math. Journal} \textbf{114}, pp. 53-64.

 [ Also: Center For Stochastic Process, University of North Carolina, Chapel Hill, North Carolina, Tech. Report 139, July 1986.]

\medskip
Z. J. Jurek (2004), Random integral representation hypothesis revisited: new classes of s-selfdecomposable laws;  Proc. International Conf. \emph{ Abstract and Applied  Analysis, Hanoi, Vietnam, 13-17 August 2002};   World Scientific, Singapore, pp. 479-498.

\noindent[Also available on: www.math.uni.wroc.pl/$^\sim$zjjurek/Hanoi2002.pdf ]

\medskip
Z. J. Jurek (2006), Cauchy transforms of measures viewed as some functionals of   Fourier transforms, \emph{Probab. Math. Stat.} vol. 26, no. 1, pp. 187-200.

\medskip
Z. J. Jurek (2007), Random integral representations for free-infinitely divisible and tempered stable distributions, \emph{ Stat.$\&$ Probab. Letters}, vol. 77, pp. 417-425.

\medskip
Z. J. Jurek (2016), On a method of introducing free-infinitely divisible probability measures, \emph{Demonstratio Math.} vol. 49, no 2, pp. 235-251.

\medskip
Z. J. Jurek (2018), Remarks on compositions of some random integral mappings,\emph{Stat. Probab. Letters}  137,  277-282.

\medskip
Z. J. Jurek (2020), On a relation between classical and free infinitely divisible transforms, \emph{Probab. Math. Stat.}, vol.40, no. 2, pp. 439-367.

\medskip
Z. J. Jurek and J.D. Mason (1993), \emph{Operator- limit distributions in probability theory.} Wiley Series in Probability and Mathematical Statistics, New York.

\medskip
Z. J. Jurek and W. Vervaat (1983),. An integral representation for selfdecomposable Banach space valued random variables. \emph{Z. Wahrscheinlichkeitstheorie und verw. Gebiete} \textbf{62} (1983), 347-362.

[ Also: Report 8121, July 1981, Katholieke Universitet, 6525 ED Nijmegen, The Netherlands.]

\medskip
M. Loeve (1963), \emph{Probability theory}, Third Ed., D. Van Nostrand Co., Princeton, New York.

\medskip
K. Urbanik (1972), Slowly varying sequences of random variables, \emph{Bull. de L'Acad.  Polon. Sciences},
Ser. Math.  Astr. Phys., vol. XX, No. 8, pp. 679-682.

\medskip
K. Urbanik (1973), Limit laws for sequences of normed sums satisfying some stability conditions;  \emph{Proc. 3rd International  Symp. on Multivariate Analysis; \emph{Academic  Press}};  Wright State University, Dayton, OH, USA; June 19-24, 1972.

\noindent[Also on:  www.math.uni.wroc.pl/$^\sim$zjjurek/urb-limitLawsOhio1973.pdf ]

\medskip
D.V. Voiculescu (1986), Addition of certain non-commuting random variables, \emph{ J. Funct. Anal.}, vol. 66, pp. 323-346.

\end{document}